\numberwithin{equation}{section}
\definecolor{lightblue}{rgb}{0.8,0.8,1}  
\newtheorem{theorem}{Theorem}
\newtheorem{definition}[theorem]{Definition}
\newtheorem{example}[theorem]{Example}
\newtheorem{lemma}[theorem]{Lemma}
\newtheorem{proposition}[theorem]{Proposition}
\newtheorem{remark}[theorem]{Remark}
\newenvironment{proof}[1][Proof]{\textbf{#1.} }{\ \rule{0.5em}{0.5em}}
\newcommand{\comment}[1]{}
\newcommand{\cover}[1]{\stackrel{#1}{\Longrightarrow}}
\newcommand{\mymarginpar}[1]{}
\newcommand{\correction}[1]{#1}
\newcommand*\circled[2][1.6]{\tikz[baseline=(char.base)]{
    \node[shape=circle, draw, inner sep=1pt,
        minimum height={\f@size*#1},] (char) {\vphantom{WAH1g}#2};}}
\begin{document}

\title{Characterising blenders via 
  covering relations and cone conditions}

\author{Maciej
J. Capi{\'n}ski}\ead{maciej.capinski@agh.edu.pl}\address{Faculty of Applied Mathematics, AGH University of Krak\'ow, al. Mickiewicza 30, 30-059 Krak\'ow, Poland}
\author{Bernd Krauskopf}\ead{b.krauskopf@auckland.ac.nz} \quad \author{Hinke
M. Osinga} \ead{h.m.osinga@auckland.ac.nz}\address{ Department of Mathematics, University of Auckland, Private Bag 92019, Auckland 1142, New Zealand}
\author{Piotr
Zgliczy{\'n}ski}\ead{umzglicz@cyf-kr.edu.pl}\address{Institute of Computer Science, Jagiellonian University, ul. prof. Stanis{\l}awa {\L}ojasiewicza 6, 30-348 Krak{\'o}w,  Poland}

\mymarginpar{\vspace{8cm}some sentences have been shortened or removed from the abstract.}
\begin{abstract}
We present a characterisation of a blender based on the topological alignment of certain sets in phase space in combination with \correction{cone conditions}. Importantly, the required conditions can be verified by checking properties of a single iterate of the diffeomorphism, which is achieved by finding finite series of sets that form suitable sequences of alignments. This characterisation is applicable \correction{in arbitrary dimension}. \correction{Moreover, the approach naturally extends to establishing $C^1$-persistent heterodimensional cycles.} Our setup is flexible and allows for a rigorous, computer-assisted validation based on interval arithmetic. 
\end{abstract}

\begin{keyword} Partial hyperbolicity, diffeomorphism, \correction{invariant manifolds}, H{\'e}non-like map, heterodimensional cycles.
\MSC[2020]
37M21, 
  37D30, 
  65G20, 
  37C29, 
  37B20
\end{keyword}


\maketitle

\section{Introduction}
\hypertarget{target:first-paragraph}{}
\mymarginpar{\hyperlink{issue:2}{Link back to \ref{first-paragraph}.}}
\correction{The notion of a \emph{blender} was introduced by Bonatti and D\'iaz~\cite{MR1381990} in order to construct a large class of examples of $C^1$-persistent, nonhyperbolic, transitive diffeomorphisms. The objective of this paper is to provide a tool that can be used to validate the existence of blenders.}

\correction{In this paper we adopt the following definition.
\mymarginpar{\hyperlink{issue:2}{Link back to \ref{issue:2}.}}\hypertarget{target:2}{}
\hypertarget{target:blender}{}
\mymarginpar{\hyperlink{issue:blender}{Link back to \ref{issue:blender}.}}
\hypertarget{target:blender2}{}\mymarginpar{\hyperlink{issue:mother}{Link back to \ref{issue:blender2}.}}
\begin{definition}[$k$-blender]\cite{bonatti2012AMS}
\label{def:blender}
Let $\Lambda \subset \mathbb{R}^n$ be an invariant transitive hyperbolic set with $d_s$-dimensional stable fibres. For $k \in \mathbb{N}$ with $k > d_s$, we say that $\Lambda$ is a \emph{$k$-blender}
if there are a $C^1$-neighbourhood $\mathcal{U}$ of $f$ and a $C^1$-open set $\mathcal{S}$ of embeddings of $(n-k)$-dimensional disks into $\mathbb{R}^n$, such that for every diffeomorphism $g\in \mathcal{U}$, for every continuation $\Lambda_g$ of $\Lambda$, and for every $S \in\mathcal{S}$,
\begin{equation}
  S \cap W^s_{\mathrm{\correction{loc}}}(\Lambda_g) \neq \emptyset. \label{eq:key-objective}
\end{equation}
\end{definition}
Intuitively, a blender is a hyperbolic set with a stable (or unstable) manifold that `behaves' as a higher-dimensional set than is expected from the underlying splitting.}

\mymarginpar{\hyperlink{issue:objectives}{Link back to \ref{issue:objectives}.}}\hypertarget{target:objectives}{}
\correction{Our main aim is to provide a flexible framework for validating \eqref{eq:key-objective}. We remark that there are a number of different but related definitions of blenders (examples include~\cite{Barrientos, MR1381990, bonatti2012AMS, bonatti2012Nonlin, bonatti2005}) in a variety of different contexts.
We believe that the approach presented in this paper is also applicable to other types of blenders. More specifically, in our construction we establish the existence of a compact invariant set, within a prescribed domain, that satisfies property \eqref{eq:key-objective}. Our method can then be applied to prove the existence of a specific type of blender if this invariant set satisfies additional conditions required for such a blender (or it is a subset of a larger hyperbolic set satisfying these conditions).}

\mymarginpar{\hyperlink{Plykin}{Link back to \ref{Plykin}.}}\hypertarget{target:Plykin}{}
\correction{Our aim is that our approach and the associated tools be flexible. It is worth noting that some types of blenders, for instance derived from Anosov or Plykin attractors, require very specific underlying dynamics, involving global constructions. In contrast, the approach presented in this paper is significantly more flexible, relying only on much weaker local or semi-local conditions.}

Our method is based on finding a family of sets in phase space with the property that the image of each such set intersects another set in a `topologically good way'. This is formalised by the notion of a \emph{covering relation}, as introduced in~\cite{MR2060531} and defined in Section~\ref{sec:prel}. While the number of sets in the family may be large, the proof of existence requires assumptions on their images under only one single iteration, which means verification can be done in a local, far more efficient manner, with better control over numerical accuracy.

More specifically, we consider throughout a diffeomorphism
\begin{displaymath}
  f : \mathbb{R}^n \to \mathbb{R}^n,
\end{displaymath}
\mymarginpar{\hyperlink{coordinates-removed}{Link back to \ref{coordinates-removed}.}}with $n \geq 3$ and a family of sets $\{ N_i \}_{i \in I} \subset \mathbb{R}^n$, where $I$ is a finite index set. Each of the sets $N_i$ \hypertarget{target:coordinates-removed}{}\correction{is homeomorphic to a Cartesian product of two balls of dimension $d_x$ and $d_y$, respectively.\mymarginpar{\hyperlink{issue:hset}{Link back to \ref{issue:hset}.}}\hypertarget{target:hset}{} We refer to these sets as {\em h-sets} (see Definition \ref{def:hset}).
We refer to $d_x$ as the {\em topological exit} dimension and to $d_y$ as the {\em topological entry} dimension.} We say that $N_i$ \emph{$f$-covers}  $N_j$, which we write $N_i \cover{f} N_j$, if the image set $f(N_i)$ \correction{is mapped in a topologically good way onto $N_j$ as formally expressed in Definition~\ref{def:covering}}. Figure~\ref{fig:covering} provides an illustration for the case $n = 3$. \correction{Informally, a covering relation involves a topological `stretching' of an h-set by the map $f$ along the topological exit direction, and a topological `squeezing' by the map $f$ along the topological entry direction, with relation to the second h-set that is being covered.}

The family of h-sets $\{ N_i \}_{i \in I}$ is equipped with a \emph{cone field} and the map $f$ satisfies a cone condition, meaning that the cone field is mapped into itself; see already Definition~\ref{def:cone-cond} for the precise statement of cone conditions.
We consider a family of topological (hyper)disks that are aligned with this cone field; we refer to them as \emph{horizontal disks} and they are defined formally in Definition~\ref{def:horizontal-disk}. With these notions, we are able to state our central result.

\hypertarget{target:mother}{}\mymarginpar{\hyperlink{issue:mother}{Link back to \ref{issue:mother}}}
In our setting, we also consider second family of h-sets $\{ M_l \}_{l \in L}$, which we refer to as `mother sets'. \correction{The mother sets play a special role in our construction: a mother set does not need to cover any other h-set. We make the distinction in notation in order to tell apart the `large' mother sets from the `smaller' sets that cover other sets.}

\begin{figure}[t!]
  \centering
  \includegraphics{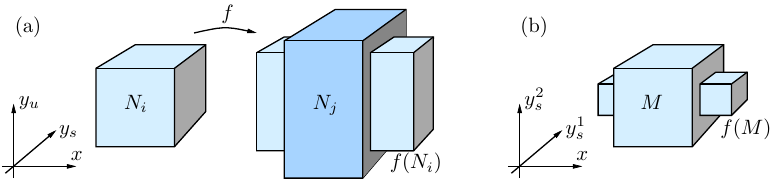}
  \caption{\label{fig:covering}
    Illustration of covering relations in a three-dimensional space. Panel~(a) shows a covering $N_i \cover{f} N_j$ with \correction{topological exit direction} $x$ and \correction{topological entry direction} $y =(y_u,\, y_s)$, assuming (weak) expansion in the direction $y_u$ and contraction along $y_s$. Panel~(b) shows the case $M \cover{f} M$ of a box that $f$-covers itself, as is typically found near a hyperbolic fixed point with one unstable direction $x$ and two stable directions $y^1_s$ and $y^2_s$. \correction{In both~(a) and~(b) the topological exit dimension is $d_x=1$ and the topological entry dimension is $d_y=2$.}}
\end{figure}

\begin{theorem}[Characterisation of a blender in terms of covering relations]
\label{th:main}
Let $f : \mathbb{R}^n \to \mathbb{R}^n$ be a diffeomorphism, with $n \geq 3$, and assume that there exists a set $U\subset \mathbb{R}^n$ such that its \correction{maximal} invariant set $\Lambda$ is hyperbolic and transitive.
\correction{Consider two finite families of h-sets $\{ N_i \}_{i \in I}\subset U$ and $\{ M_l \}_{l \in L} \subset U$} that satisfy the following conditions:
\begin{list}{(B\arabic{enumi})}{\usecounter{enumi} \setlength{\leftmargin}{9mm} \setlength{\labelwidth}{9mm}}
\item For every $l \in L$ and every horizontal disk $[h]$ in $M_l$, there exist $i \in I$ such that $[h] \cap N_i$  is a horizontal disk in $N_i$.
\item \correction{For every $i \in I$ at least one of the following two cases holds:
\begin{itemize}[leftmargin=-1em]
\item there exists $j \in I$ such that $N_i \cover{f} N_j$ and $f$ satisfies cone conditions from $N_i$ to $N_j$,
\item there exists $l \in L$ such that $N_i \cover{f} M_l$ and $f$ satisfies cone conditions from $N_i$ to $M_l$.
\end{itemize}
}
\end{list}
If, in addition, the \correction{dimension of the topological entry} of the h-sets is larger than the \correction{dimension of the stable bundle} of $\Lambda$, then $\Lambda$ is a blender.
\end{theorem}

\correction{For readers who prefer more abstract statements we provide a shorter reformulation of the above result in Theorem \ref{cor:main}, where we state it without referring to mother sets. However, we find the notational distinction of the mother sets helpful for the actual construction leading to {\em(B1)} and {\em(B2)}; we discuss this in Section \ref{sec:compsetup}.}

The two conditions \emph{(B1)} and~\emph{(B2)} provide us with a flexible framework for establishing the existence of blenders that can be applied in various contexts. They concern a neighbourhood of a hyperbolic set, and specify when such a hyperbolic set is a blender. Note that the simplest case of satisfying \emph{(B1)} and~\emph{(B2)} is a set $M$ that $f$-covers itself, which is a typical setting one encounters 
around a hyperbolic fixed point; see Figure~\ref{fig:covering}(b). \hypertarget{target:cardinality}{}\correction{While \eqref{eq:key-objective} is satisfied in this case, there is no blender in $M$ since the dimension of topological entry is the same as the dimension of the stable bundle.}\mymarginpar{\hyperlink{issue:cardinality}{Link back to \ref{issue:cardinality}}}

In general, to establish the existence of a blender, one should expect to need a possibly large number of h-sets $\{ N_i \}_{i \in I}$ and $\{ M_l \}_{l \in L}$. Nevertheless, verifying \emph{(B1)} and~\emph{(B2)} only requires checking bounds on first images of the sets $N_i$ to establish the required covering and cone conditions.
\correction{Since covering relations and cone conditions are persistent under $C^1$-perturbations, blenders established with our method are indeed $C^1$-persistent.}

\correction{Conditions in the same spirit as {\em(B1)} and {\em(B2)} can also be used to establish the existence of $C^1$-persistent heterodimensional cycles}~\cite{bonatti2008, bonatti2012Nonlin}.
\hypertarget{target:cycles}{}\mymarginpar{\hyperlink{issue:cycles}{Link back to \ref{issue:cycles}}}

\correction{\begin{definition}[Heterodimensional cycle]\cite{bonatti2008}
Let $\Lambda $ and $\widetilde{\Lambda}$ be two hyperbolic sets of a diffeomorphism $f : \mathbb{R}^n \to \mathbb{R}^n$.
We say that $f$ has a heterodimensional cycle if
\begin{enumerate}
\item  the indices (dimensions of the unstable bundle) of the sets $\Lambda$ and $\widetilde{\Lambda}$ are different,
\item the stable manifold of $\Lambda$ meets the unstable manifold of $\widetilde{\Lambda}$ and the same holds for the stable
manifold of $\widetilde{\Lambda}$ and the unstable manifold of $\Lambda$.
\end{enumerate}
\end{definition}}

\correction{The conditions required to give $C^1$-persistence of a heterodimensional cycle will follow from the assumption that we have suitable homoclinic and heteroclinic connections between the two hyperbolic invariant sets. These connections are expressed in terms of sequences of covering relations between two families of h-sets. One family of h-sets is positioned at $\Lambda$ and the other at $\widetilde{ \Lambda}$. We will assume that these two families are {\em interconnected}; the formal definition of this notion is spelled out in Definition \ref{def:interconnected}. Then we have the following result.}

\correction{\begin{theorem}[Existence of a heterodimensional cycle]
\label{th:cycles}
Consider two sets $U,\widetilde{U}\subset \mathbb{R}^n$ with two hyperbolic invariant sets for a diffeomorphism $f : \mathbb{R}^n \to \mathbb{R}^n$,
\[
\Lambda = \mathrm{Inv}(f,U)\qquad\mbox{and}\qquad \widetilde{\Lambda} = \mathrm{Inv}(f,\widetilde{U}).
\]
Let $\{ M_l \} \subset U$ and $\{ \widetilde{M}_i \} \subset \widetilde{U}$ be two finite families of h-sets. Consider the following condition:\begin{list}{(B\arabic{enumi})}{\usecounter{enumi}
    \setlength{\leftmargin}{9mm} \setlength{\labelwidth}{9mm}}
\setcounter{enumi}{2}
\item The families $\{ M_l \} $ and $\{ \widetilde{M}_i \}$ are {\em interconnected} by sequences of covering relations.
\end{list}
If (B3) is satisfied and the indices of the sets $\Lambda$ and $\widetilde{\Lambda}$ are different, then $f$ has a heterodimensional cycle.
\end{theorem}}
\correction{Condition {\em (B3)} is expressed in terms of covering relations and cone conditions, which are persistent under $C^1$-perturbations (see Definition \ref{def:interconnected}). This means that the heterodimensional cycles from Theorem \ref{th:cycles} are persistent under small $C^1$-perturbations of the map.}

\hypertarget{target:homoclinic}{}\mymarginpar{\hyperlink{issue:homoclinic}{Link back to \ref{issue:homoclinic}}}
\mymarginpar{\hyperlink{issue:2blenders}{Link back to \ref{issue:2blenders}}}
\correction{Condition {\em(B3)} requires homoclinic connections to $\Lambda$ and $\widetilde{\Lambda}$, as well as heteroclinic connections between $\Lambda$ and $\widetilde{\Lambda}$, expressed by means of suitable sequences of covering relations. If the first of the two hyperbolic sets, say, $\Lambda$, is a blender as validated by means of Theorem \ref{th:main}, then one of the homoclinic connections needed for {\em (B3)} follows from {\em (B1)} and {\em (B2)}. In other words, it is merely a reformulation of blender activation. In fact, to fulfil {\em (B3)} and to have different indices of the hyperbolic sets at least one of them needs to be a blender. The heteroclinic connections in {\em (B3)} ensure that we have a link from the blender $\Lambda$ to the second hyperbolic set $\widetilde{\Lambda}$, and back. This means that the unstable manifold of the $\widetilde{\Lambda}$ can enter a neighbourhood of $\Lambda$, which ensures an intersection with the stable manifold of the blender $\Lambda$. The heteroclinic connection in the other direction ensures that the unstable manifold of the blender $\Lambda$ intersects the stable manifold of $\widetilde{\Lambda}$. Indeed, $\widetilde{\Lambda}$ may be a hyperbolic fixed point, but it could also be a hyperbolic periodic orbit, or some more complicated hyperbolic set, or even a blender.} 

\correction{Our aim is also to show that our approach and tools can be used in practice. There are only a few known examples of blenders in diffeomorphisms given by explicit formulas.} Perhaps the most studied example is the H{\'e}non-like family 
\begin{equation}
\label{eq:Henon-family}
  f\left(\mathrm{x},\, \mathrm{y},\, \mathrm{z}\right) =
  \left( \mathrm{y},\, \mu + \mathrm{y}^2 + \beta \, \mathrm{x}, \xi \, \mathrm{z}+\mathrm{y} \right)
\end{equation}
that was studied in~\cite{hkos_blenderDEA, hkos_blender, hkos_boxedin}; see also~\cite{diaz2014}. An analytical proof of existence was given in~\cite{hkos_blender} for the single parameter point $\mu = -9.5$, $\beta = 0.1$ and $\xi = 1.185$, along with a numerically generated conjecture that a blender exists over the full range of $\xi \in (1, 1.843]$.  Similarly, the numerical study in~\cite{hkos_blenderDEA} suggests that a blender exists for $\mu = -9.5$, $\beta = 0.3$ and $\xi \in (1, 1.75]$, which we select as an illustration of how to apply Theorem~\ref{th:main}. More precisely, in Section~\ref{sec:CAP}, we present a proof of the following theorem.


\begin{theorem}
\label{th:henon}
Consider the H{\'e}non-like family defined in~\eqref{eq:Henon-family} with $\mu = -9.5$ and $\beta = 0.3$. For every $\xi \in [ 1.01,\, 1.125]$, this H{\'e}non-like family has a blender.
\end{theorem}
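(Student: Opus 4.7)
The plan is to apply Theorem~\ref{th:main} to the map $f$ from~\eqref{eq:Henon-family} with $\mu = -9.5$, $\beta = 0.3$, so that the proof reduces to constructing an explicit finite family $\{N_i\}_{i \in I}$ with a sub-family of mother sets $\{M_l\}_{l \in L}$ and verifying hypotheses \emph{(B1)} and~\emph{(B2)} by interval arithmetic on the whole parameter window $\xi \in [1.01,\,1.125]$. The first step is to fix a trapping region $U \subset \mathbb{R}^3$ inside which the $(X,Y)$-projection of $f$ acts as a full Smale horseshoe whose maximal invariant set is uniformly hyperbolic and contains two saddle fixed points; this is essentially the three-dimensional box already used in~\cite{hkos_boxedin, hkos_blender}. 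Since $\xi > 1$ throughout the parameter interval, the $Z$-direction is an additional weak expansion, so the maximal invariant set $\Lambda \subset U$ of the full three-dimensional map is hyperbolic and transitive with two unstable directions (the strong unstable H{\'e}non direction and the weak unstable $Z$) and a single contracting direction. In particular, the dimension hypothesis of Theorem~\ref{th:main} will be satisfied as soon as the exit set of the $N_i$ has dimension at least two.

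Next I would construct the box family. The two mother sets $M_1$ and $M_2$ are natural slabs around the two ``legs'' of the H{\'e}non horseshoe, elongated along the strong-stable H{\'e}non direction and along $Z$, whose union covers $\Lambda$ and encodes the two symbols of the shift. Each $M_l$ is then partitioned in the central coordinate $Z$ (and, if necessary, in the direction transverse to the strong unstable direction in the $(X,Y)$-plane) into a chain of smaller sets; these, together with their first $f$-images, form the non-mother family $\{N_i\}_{i \in I \setminus L}$. On every box I take $Y$ as the one-dimensional exit coordinate and $(X,Z)$ as the entry plane, so that the exit set is two-dimensional and the dimension condition of Theorem~\ref{th:main} is satisfied. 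The cone field is the natural unstable cone around the H{\'e}non strong-unstable direction in $(X,Y)$, and its invariance under $Df$ follows from the fact that, for $\beta = 0.3$ and $\xi$ close to $1$, the $Y$-derivative is dominant over the other entries of the Jacobian of $f$ on $U$.

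Verifying \emph{(B2)} is then a finite collection of checks $N_i \cover{f} N_j$ together with cone-field invariance on each $N_i$; both are open conditions and can be enclosed rigorously by interval arithmetic, so a single interval enclosure of the parameter $\xi$ certifies them simultaneously for all $\xi \in [1.01,\,1.125]$. Verifying \emph{(B1)} is more delicate: one must show that every horizontal disk in a mother slab $M_l$ meets one of the non-mother boxes $N_i$ as a horizontal disk. Since a horizontal disk is by Definition~\ref{def:horizontal-disk} a graph over the exit coordinate $Y$, this reduces to choosing the $(X,Z)$-partition of $M_l$ fine enough that the $(X,Z)$-projections of the $N_i$ cover the whole $(X,Z)$-footprint of $M_l$; any such graph must then cross at least one $N_i$ as a full horizontal disk.

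The main obstacle is the tension between \emph{(B1)} and \emph{(B2)} when $\xi$ is close to $1$: the weaker the central expansion, the less a single iterate of $f$ stretches a horizontal disk in the $Z$-direction, so more sub-boxes are needed in the $(X,Z)$-partition that certifies \emph{(B1)}, while every one of these sub-boxes must still cover another box in the family and preserve the cone field to establish \emph{(B2)}. Producing a single combinatorially consistent family of boxes that discharges both conditions uniformly for every $\xi$ in the whole parameter window, and validating it rigorously by interval arithmetic in Section~\ref{sec:CAP}, is the technical heart of the proof; once this is done, Theorem~\ref{th:main} immediately yields that $\Lambda$ is a blender, which is the content of Theorem~\ref{th:henon}.
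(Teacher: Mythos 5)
Your high-level strategy --- reduce Theorem~\ref{th:henon} to Theorem~\ref{th:main} and discharge \emph{(B1)}--\emph{(B2)} by interval arithmetic over the whole parameter window --- is exactly the paper's, but the box family you sketch is essentially different from the one the paper builds, and it omits the one ingredient that actually makes $\Lambda$ a blender. The point of \emph{(B1)}+\emph{(B2)} is that the covering chains starting from the thin initial slices of a mother set must \emph{return} to a mother set, and the $y_u$-images of the returning chains must overlap so as to cover the mother set's full central extent (the analogue of $N_1$ and $N_2$ overlapping in $y_u$ in Figure~\ref{fig:blender_affine}); without this, the wall construction of Lemmas~\ref{lem:wall-propagation}--\ref{lem:Wall} cannot be iterated past the first step. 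Nothing in your proposal arranges this. The paper does it with a \emph{single} small mother set $M$ near $p^+$, two homoclinic excursions of lengths $5$ and $4$, and a $\xi$-dependent tuning of the centre $z_M(\xi)$ so that one excursion returns with negative and the other with positive net $\mathrm{Z}$-displacement (Equations~\eqref{eq:xi-change-1}--\eqref{eq:xi-change-2}); the $100$ overlapping slices $N_{10c}$, $N_{20c}$ then realise \emph{(B1)} via \eqref{eq:M-parts}. Your choice --- two mother slabs around the legs of the horseshoe whose union covers the \emph{maximal} invariant set, with chains of length about two --- runs into a quantitative obstruction you do not address: since $Z_{k+1}=\xi Z_k+Y_k$ with $\xi>1$, points of the maximal invariant set satisfy $Z_0=-\sum_{j\ge 0}\xi^{-(j+1)}Y_j$, so that set is spread over a $\mathrm{Z}$-range of order $1/(\xi-1)$ (several hundred units at $\xi=1.01$), and the two legs' natural $\mathrm{Z}$-centres $\rho^{\pm}/(1-\xi)$ are far apart. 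Single-iterate returns of thin slices into such slabs, contained in their entry extent and overlapping in $y_u$, are neither established nor plausible without the homoclinic-excursion device; this is precisely why the paper works with a small non-maximal invariant set localized near $p^+$.

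Two further points. First, \emph{(B1)} fails for a genuine \emph{partition} of $M_l$ in the central coordinate: a horizontal disk straddling the boundary between two adjacent slices is a full horizontal disk in neither. The slices must \emph{overlap} by more than the cone-controlled $y$-oscillation of a horizontal disk across the exit extent (the paper takes overlap $\Delta$ against an oscillation of at most $0.1\Delta$, using $\kappa_u=\kappa_s=\Delta/2$); ``the projections cover the footprint'' is necessary but not sufficient. Second, hyperbolicity and transitivity of $\Lambda$ are hypotheses of Theorem~\ref{th:main}, not consequences of it; the paper verifies them with a separate computer-assisted argument (strong hyperbolicity via the quadratic form $Q$, and a conjugacy to a subshift built from the two homoclinic blocks, Appendix~\ref{sec:hyp-trans}), whereas you assert them from the planar horseshoe. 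That last item is repairable, but the missing overlap-on-return mechanism is the technical heart of the proof and constitutes a genuine gap.
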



There are some differences between our approach and the classical setup by Bonatti and D\'{\i}az~\cite{MR1381990}. Their method is based on appropriate assumptions on the images of topological disks under iterates of the map, in a neighbourhood of a horseshoe at a hyperbolic fixed point. Our method can be applied in this setting, but is not limited to it. More precisely, fixed points or periodic points and their associated invariant manifolds are not used explicitly in our approach. This means that our method can be applied more generally, for example, to hyperbolic sets that arise in the context of hetero/homoclinic tangles between a number of hyperbolic fixed points. In particular, our construction of covering provides a method for verifying the existence of \correction{$C^1$-persistent} heterodimensional cycles.

Another difference is that we do not require conditions on the iterations of disks. Rather, what we require is good topological alignment of images of sets, expressed through covering relations. Covering relations have proven to be a versatile and flexible tool, due to the simplicity of their validation~\cite{MR3032848, CG-CPAM, MR2947932, MR1862804, MR2351028, MR2174417,  MR2534406}. Moreover, we can position our sets in such a way that we have a sequence of coverings, and validation requires only a single iterate of the map instead of considering images of topological disks for several compositions of the map, as in~\cite{MR1381990}. Therefore, we can use a `topological parallel shooting' approach, that allows us to avoid compositions. In particular, our assumptions are formulated by means of conditions that can be validated by using interval arithmetic and computer-assisted tools. The proof in~\cite{hkos_blender} is for existence of a blender that is a \emph{maximal} (transitive) hyperbolic set. Our approach is, in some sense, more general and, as an example, we present in Section~\ref{sec:CAP} the proof of existence of a blender in the H{\'e}non-like family for the parameter range as stated in Theorem~\ref{th:henon}; this blender is contained in a family of covering sequences that are positioned to be disjoint from any fixed point.

In our construction we use a \emph{wall property}, which states that a set in the investigated neighbourhood intersects with every horizontal disk. During our construction we ensure that the wall property is satisfied by the stable set of the invariant hyperbolic set, and this implies that the hyperbolic set is a blender.

The paper is organised as follows. Section~\ref{sec:prel} introduces some necessary background. Specifically we present the affine construction of a blender in Section~\ref{sec:blender-aff}. We then provide required preliminaries in Section~\ref{sec:cone-cond-covering} by introducing notation and statements regarding cone conditions and covering relations. In Section~\ref{sec:main-results}, we prove our main Theorem \ref{th:main}, and we also explain conditions \emph{(B1)} and~\emph{(B2)} more precisely. In Section~\ref{sec:heteroclinic}, \correction{we explain the details of condition~\emph{(B3)} and prove Theorem \ref{th:cycles}}. In Section~\ref{sec:CAP}, we present a computer-assisted proof\footnote{The code for the computer assisted proof is available on the personal web page of the corresponding author.} of Theorem~\ref{th:henon} regarding the existence of a $2$-blender for the H\'enon-like family \eqref{eq:Henon-family}. This example illustrates how our conditions can be validated in practice. We end with a discussion and conclusions in Section~\ref{sec:conclusions}. Furthermore, in the Appendix, we also show how hyperbolicity and transitivity of an invariant set can be established with computer-assisted tools.

\section{Background, notation and preliminaries}
\label{sec:prel}
\correction{We start with basic preliminaries for hyperbolic sets in Section~\ref{sec:hyperbolic-set}. Then, we discuss a simple affine example of a blender in Section~\ref{sec:blender-aff}. We do so in order to provide a simple illustration and intuition for our method.} The general discussion of our two main tools, cone conditions and covering relations follows in Section~\ref{sec:cone-cond-covering}.

\subsection{Hyperbolic sets}
\label{sec:hyperbolic-set}

\correction{Here, we recall the standard definitions of a hyperbolic set and its invariant manifolds.}


\begin{definition}
\label{def:hyp}
The set $\Lambda \subset \mathbb{R}^n$ is hyperbolic if for every $p \in \Lambda$, the $n$-dimensional tangent space $T_p\Lambda$ splits into a direct sum $T_p\Lambda = E_p^u \oplus E_p^s$ of (possibly trivial) vector spaces (fibre bundles) $E_p^u$ and $E_p^s$, respectively, that are invariant under the derivative $Df$ of $f$. More precisely, for nonzero vectors $v_u \in E_p^u$ and $v_s \in E_{p}^s$, we have
\begin{align*}
  Df(p) \, v_u &\in E_{f(p)}^u,  \\
  Df(p) \, v_s &\in E_{f(p)}^s,
\end{align*}
%
and there exist $c > 0$ and $\lambda \in (0,1)$, independent of $p$, such that, for all $j \geq 1$,
\begin{align*}
  \left\| Df^{-j}(p) \, v_u \right\| &< c \, \lambda^j \, \left\| v_u \right\|, \\
  \left\| Df^j(p) \, v_s \right\| &< c \, \lambda^j \, \left\| v_s \right\|.
\end{align*}
\end{definition}


\correction{\begin{definition}[Unstable and stable manifolds of $\Lambda$]
\label{def:inv-manifolds}
For a given neighbourhood $U$ of a hyperbolic set $\Lambda$ the local unstable and stable manifolds of $\Lambda$ in $U$ are defined as
\begin{displaymath}
  W^u_{\mathrm{loc}}(\Lambda) = \bigcup_{p\in\Lambda} W_{p,\mathrm{loc}}^u \quad \text{and} \quad
  W^s_{\mathrm{loc}}(\Lambda) = \bigcup_{p\in\Lambda} W_{p,\mathrm{loc}}^s,
\end{displaymath}
where $W_{p,\mathrm{loc}}^u$ and $W_{p,\mathrm{loc}}^s$ are the local unstable and stable fibres, respectively, defined as
\begin{align*}
  W_{p,\mathrm{loc}}^u  &= \left\{ q \in \mathbb{R}^n \bigm|  \| f^{-j}(p) - f^{-j}(q)\| \to 0 \ \text{as} \ j \to \infty,\mbox{ and }  f^{-j}(q) \in U \mbox{ for all } j\ge 0\right\}, \\
  W_{p,\mathrm{loc}}^s  &= \left\{ q \in \mathbb{R}^n \bigm|  \| f^j(p)  - f^j(q) \| \to 0 \ \text{as} \  j \to \infty,\mbox{ and }  f^{-j}(q)\in U \mbox{ for all } j\ge 0 \right\}.
\end{align*}
We also define the respective global unstable and stable manifolds as
\begin{displaymath}
  W^u(\Lambda) = \bigcup_{k\ge 0} f^k(W^u(\Lambda)) \quad \text{and} \quad
  W^s(\Lambda) = \bigcup_{k\le 0} f^k(W^s(\Lambda)).
\end{displaymath}
\end{definition}}

\begin{figure}[t!]
  \centering
  \includegraphics{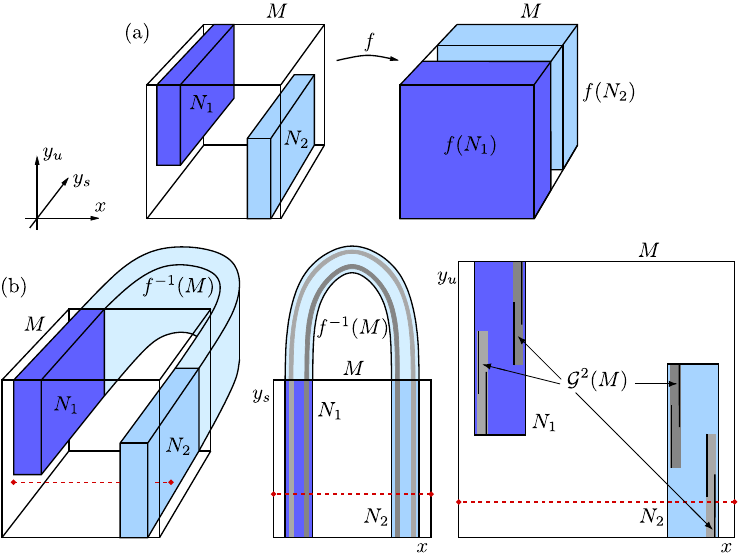}
  \caption{\label{fig:blender_affine}
    Sketch of how a blender horseshoe is generated when the map $f$ is affine in a box $M$ defined on the three-dimensional $(x,y_s,y_u)$-space. Panel~(a) illustrates how sub-boxes $N_1$ (blue) and $N_2$ (light blue) and their images are contained in the mother set $M$. Panel~(b) shows the three-dimensional horseshoe set $f^{-1}(M)$ that generates $N_1 \dot\cup N_2 \correction{= \mathcal{G}(M):=f^{-1}(M)\cap M}$, with a top view in the $(x,y_s)$-plane that also shows \correction{$f^{-1}(\mathcal{G}(M))$} (grey set) and a front view in the $(x,y_u)$-plane that \correction{shows $\mathcal{G}^2(M)$ in grey and $\mathcal{G}^3(M)$ in black}. Note that any horizontal segment, such as the dashed red line, that traverses $M$ in the $x$-direction must intersect \correction{$\mathcal{G}^{j}(M)$} for any $j \geq 1$.}
\end{figure}

\subsection{The affine blender horseshoe}
\label{sec:blender-aff}
Bonatti, Crovisier, D\'{\i}az, and Wilkinson designed an abstract example in~\cite{MR3559345} of a diffeomorphism $f$ on $\mathbb{R}^3$ that features a blender horseshoe. We briefly discuss this example to demonstrate the features of our construction of covering relations, as well as provide intuition behind it. We focus on the geometric features, which are illustrated in Figure~\ref{fig:blender_affine}. The coordinate axes are denoted $x$, $y_u$, and $y_s$, along which $f$ is expanding, weakly expanding, and contracting, respectively. The diffeomorphism $f$ maps two cuboids $N_1$ and $N_2$, which are contained in a larger cube $M$, tightly back into $M$ as shown in Figure~\ref{fig:blender_affine}(a). Here, the restrictions $f|_{N_1}$ and $f|_{N_2}$ are taken to be affine and the coordinate axes agree with the expanding and contracting directions of $f$.

A better intuition can be gained by considering the pre-images of these sets, as illustrated in Figure~\ref{fig:blender_affine}(b). This reveals the three-dimensional blender horseshoe and highlights the particular selection of $N_1$ and $N_2$ such that $N_1 \cup N_2 =\correction{\mathcal{G}(M):=} f^{-1}(M) \cap M$. Indeed, the top view of the $(x,y_s)$-plane shows the well-known affine Smale horseshoe construction inside a square~\cite{smale}. The (forward and backward) invariant set of $f$ in $M$ is a hyperbolic Cantor set, which we denote $\Lambda$. Further pre-images \correction{$\mathcal{G}^{j}(M)$} form a nested sequence of cuboids in $M$, with a limit equal \correction{to the local stable manifold $W^s_{\mathrm{loc}}(\Lambda)$ in $M$. Due to the affine nature of $f$, the local stable manifold $W^s_{\mathrm{loc}}(\Lambda)$ is a Cantor set of line segments that stretch all the way across in the $y_s$-direction.}

Notice from the projections onto the $(x,y_u)$-plane in Figure~\ref{fig:blender_affine}(b) that the cuboids $N_1$ and $N_2$ overlap with respect to the weakly expanding $y_u$-direction. As a result, any `horizontal' segment in the $x$-direction that joins the left and right faces of $M$ must intersect at least one of the preimages in \correction{$\mathcal{G}^{j}(M)$}, for any given $j$; see the (red) dashed line as an example of such a horizontal segment in panel~(b). It follows from taking the limit as $j \to \infty$ that the given horizontal segment must intersect \correction{$W^s_{\mathrm{loc}}(\Lambda)$ in $M$}. This is the surprising and characterising feature of the blender, which can be interpreted as \correction{$W^s_{\mathrm{loc}}(\Lambda)$} behaving effectively like an object of a higher dimension, a surface in this case, when viewed along the $x$-direction. Importantly, the existence of such an intersection \correction{persists under small $C^1$-perturbation of the map}.
\hypertarget{target:pers}{}
\mymarginpar{
\hyperlink{issue:pers}{Link back to \ref{issue:pers}}
} 

\subsection{Cone conditions, covering relations and horizontal disks}
\label{sec:cone-cond-covering}
Our objective is to develop a methodology for establishing the existence of blenders within an explicit domain and providing an explicit estimate for the set of surfaces $\mathcal{S}$ that intersect $W^s(\Lambda)$. Our main two tools for doing so are cone conditions and covering relations.

More specifically, we wish to establish the existence of a blender that intersects a finite union
\begin{displaymath}
  \bigcup N_i := \bigcup\limits_{i \in I} N_i \subset \mathbb{R}^n
\end{displaymath}
of compact sets $N_i\subset\mathbb{R}^n$ with index set $I$. We equip each $N_i$ with a local coordinate change $\gamma_i : \mathbb{R}^n \to \mathbb{R}^n$ that maps points in $N_i$ to pairs $(x,\, y) \in \mathbb{R}^{d_x} \times \mathbb{R}^{d_y} = \mathbb{R}^n$, with the components $x \in \mathbb{R}^{d_x}$ and $y \in \mathbb{R}^{d_y}$ representing `topological exit' and `topological entry' coordinates, and corresponding projections $\pi_{x}$ and $\pi_{y}$, respectively. We refer to $N_i$ as an \textit{h-set}  (which stands for hyperbolic-type set) that is formally defined as follows; see also Figure~\ref{fig:covering}.

\begin{definition}[h-set \cite{MR2060532,MR2060531}]
\label{def:hset}
A compact subset $N \subset \mathbb{R}^n$ is an \emph{h-set} if there exists a homeomorphism $\gamma: \mathbb{R}^n \to \mathbb{R}^n$ such that
\begin{equation}
\label{eq:N-local}
  N_{\gamma} := \gamma(N) = \overline{B}_{d_x} \times \overline{B}_{d_y},
\end{equation}
where $\overline{B}_{d_x}$ and $\overline{B}_{d_y}$ are closed unit balls; \correction{we refer to their dimensions $d_x$ and $d_y$ as the dimensions of the topological exit and entry, respectively.} We define the \emph{exit set} $N^-$ and the \emph{entry set} $N^+$ as
\begin{displaymath}
  N^{-} :=\gamma^{-1}\left( \partial \overline{B}_{d_x} \times \overline{B}_{d_y} \right)
  \quad \text{and} \quad
  N^{+} :=\gamma^{-1}\left( \overline{B}_{d_x} \times \partial\overline{B}_{d_y} \right).
\end{displaymath}
\end{definition}

\begin{remark}
\label{rem:norms}
To define the closed balls $\overline{B}_{d_x}$ and $\overline{B}_{d_y}$ in \eqref{eq:N-local} we can use different norms on $\mathbb{R}^{d_x}$ and $\mathbb{R}^{d_y}$, respectively. For applications in computer-assisted proofs, a natural choice is to use the maximum norm, in which case the balls are Cartesian products of intervals, or simply (hyper)cubes.
\end{remark}

From now on, we assume that the $N_i$ are h-sets for all $i \in I$. The dimensions $d_x$ and $d_y$ do not depend on the index $i \in I$. For simplicity we refer to the local coordinates as $(x,\, y)$ regardless of the choice of $N_i$, keeping in mind that they are associated with a local coordinate change $\gamma_i$.

\begin{remark}
We do not need to assume that the dimensions $d_x$ and $d_y$ agree with the dimensions $d_u$ and $d_s$ of the unstable and stable fibres of $\Lambda$ from Definition~\ref{def:hyp}. In our methodology the exit coordinates $x$ are associated with a strong hyperbolic expansion, and the entry coordinates $y$ could include weak hyperbolic expansion.
\end{remark}

When the diffeomorphism $f$ is expressed in local coordinates of sets $N_i$ and $N_j$ for $i, j \in I$ then we refer to it as
\begin{displaymath}
  f_{ji} = \gamma_j \circ f \circ \gamma_i^{-1}.
\end{displaymath}
This representation is useful for the definition of a covering relation between two h-sets.

\begin{definition}[Covering between h-sets~\cite{MR2060532, MR2060531}]
\label{def:covering}
We say that the h-set \emph{$N_i$ $f$-covers the h-set $N_j$}, which we denote
\begin{displaymath}
  N_i \cover{f} N_j,
\end{displaymath}
if there exist:
\begin{enumerate}
\item
a homotopy $\varsigma : [0, 1] \times N_{\gamma_i} \to \mathbb{R}^{d_x} \times \mathbb{R}^{d_y}$, where $N_{\gamma_i} := \gamma_i(N_i)$, such that for any $(x, y) \in N_{\gamma_i}$ and any $t \in [0,1]$,
\begin{align*}
  \varsigma\left( \, 0,\, (x, y) \right) 
  &= f_{ji}(x, y), \\
  \varsigma\left( t,\, \gamma_i(N_i^{-}) \right) \cap N_{\gamma_j} &= \emptyset, \\
  \varsigma\left( t,\, N_{\gamma_i} \right) \cap \gamma_j(N_j^{+}) &= \emptyset,
\end{align*}
and
\item
a linear map $A : \mathbb{R}^{d_x} \to \mathbb{R}^{d_x}$ such that
\begin{align*}
  \varsigma\left( \, 1,\,  (x, y) \right) &= (A x,\, 0), \\
  A\left( \partial\overline{B}_{d_x} \right) & \subset \mathbb{R}^{d_x}\setminus{\overline{B}_{d_x}}.
\end{align*}
\end{enumerate}
\end{definition}

For our purpose of proving the existence of a blender, a covering $N_i \cover{f} N_j$ corresponds to a strong expansion for the $x$-component, while the $y$-component includes contraction as well as weak expansion. Hence, the picture in this case is as shown in Figure~\ref{fig:covering}(a). In particular, the situation in Figure~\ref{fig:covering}(b) of a set covering itself is excluded, because the  invariant set contained in $\bigcup N_i$ could then be a mere saddle fixed point. Such a saddle fixed point would have a ($d_s=2$)-dimensional stable manifold, so the only possible choice of $k > d_s$ is $k = 3$, and it is not possible to have a $3$-blender in $\mathbb{R}^3$.

\begin{figure}[t!]
  \centering
  \includegraphics{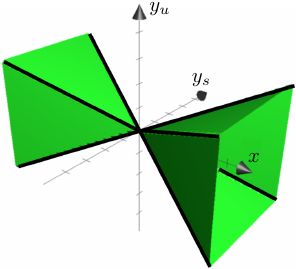}
  \caption{\label{fig:cone}
    Example of a cone ${\cal C}(0)$ defined for local coordinates of a three-dimensional space $\mathbb{R}^{d_x} \times \mathbb{R}^{d_y} = \mathbb{R}^1 \times \mathbb{R}^2$ with chosen norms $\| x \|_{d_x} = \frac{1}{2} \, | x |$ and $\| y \|_{d_y} = \| (y_u,\, y_s) \|_{d_y} = \max{( | y_u |,\, | y_s |)}$.}
\end{figure}
%
We quantify the level of expansion and contraction induced by a covering relation in terms of a cone condition defined for the respective h-sets. In the local coordinates given by $\gamma_i$, we define a cone at $z_{\gamma_i} = (\pi_x(z_{\gamma_i}), \pi_y(z_{\gamma_i})) \in \mathbb{R}^{d_x} \times \mathbb{R}^{d_y}$ as
\begin{equation}
\label{eq:cone-def-2}
  {\cal C}_{\gamma_i}(z_{\gamma_i}) := \left\{ (q_x,\, q_y) \in \mathbb{R}^{d_x} \times \mathbb{R}^{d_y} \Bigm| \left\| q_x - \pi_x(z_{\gamma_i}) \right\|_{d_x}  > \left\| q_y - \pi_y(z_{\gamma_i}) \right\|_{d_y} \right\},
\end{equation}
where $\| \cdot \|_{d_x}$ and $\| \cdot \|_{d_y}$ are norms on $\mathbb{R}^{d_x}$ and $\mathbb{R}^{d_y}$, respectively. Figure~\ref{fig:cone} shows an example of a cone in local coordinates defined on $\mathbb{R}^{3}$ with $d_x = 1$ and $d_y = 2$, where $\| \cdot \|_{d_x}$ and $\| \cdot \|_{d_y}$ are given by the respective (suitably scaled) $L_1$-norm.

\begin{remark}
The choice of  the norms $\|\cdot\|_{d_x}$ and $\|\cdot\|_{d_y}$ can be independent of the choice of the norms that determine the closed balls $\overline{B}_{d_x}$ and $\overline{B}_{d_y}$ in Definition~\ref{def:hset}.
\end{remark}

In the state space of the map $f$ the cone at $z \in N_i$ is defined as
\begin{equation}
\label{eq:Ci-def}
  {\cal C}_i(z) := \gamma_i^{-1}\left(  {\cal C}_{\gamma_i}(\gamma_i(z)) \right).
\end{equation}

\begin{remark}
We do not rule out the situation that $z \in N_i \cap N_j$ for $i \neq j$, in which case typically ${\cal C}_i(z) \neq {\cal C}_j(z)$. However, this will not present a problem in our methodology.
\end{remark}

\begin{definition}[Cone conditions]
\label{def:cone-cond}
Let $f$ be a diffeomorphism and let $N_i,N_j$ be two h-sets. We say that $f$ satisfies \emph{cone conditions from $N_i$ to $N_j$} if, for any $z \in N_i$ such that $f(z) \in N_j$, we have
\begin{equation}
\label{eq:cc2}
  f\left( {\cal C}_i(z) \right) \subset {\cal C}_j\left( f(z) \right).
\end{equation}
\end{definition}

Importantly, one can have cone conditions in the setting when there is no contraction along the entry coordinates $y$. What is needed is sufficiently strong expansion of the exit coordinates $x$, as is demonstrated with the following toy example.

\begin{example}
\label{ex:linear}
Consider the linear map $f(x,\, y_u, \,y_s)  = \left( 4 x,\, 2 y_u,\, \frac{1}{2} y_s \right)$ on $\mathbb{R}^{3}$, and let $\| \cdot \|$ be the standard Euclidean norm. We have $d_x = 1$ and $d_y =2$ with $y = (y_u,\, y_s)$, where $y_s$ is a stable and $y_u$ an unstable coordinate. Despite the expansion along the $y_u$-coordinate, cone conditions hold for any choice of $N_i, N_j \subseteq \mathbb{R}^3$. Indeed, let $z \in \mathbb{R}^3$ and let $q \in \mathbb{R}^3$ be such that $\left\| \pi_{x} (q - z) \right\| - \left\| \pi_{y} (q - z) \right\| = \left\| x \right\| - \left\| (y_u,\, y_s)  \right\| > 0$, where we use the notation $q - z = (x,\, y_u,\, y_s)$. Since $f$ is linear, we have
\begin{eqnarray*}
  \left\| \pi_{x} (f(q) - f(z)) \right\| - \left\| \pi_{y} (f(q) - f(z)) \right\|
  &=& \left\| \pi_{x} f(q - z) \right\| - \left\| \pi_{y} f(q - z) \right\| \\
  &=& \left\| 4 x \right\| -\left\| \left(  2 y_u,\,  \tfrac{1}{2} y_s\right)  \right\| \\
  &>& 2 \, \left(  \left\| x \right\| - \left\| \left( y_u, \,  \tfrac{1}{4} y_s \right)  \right\| \right) \\
  &\geq&  2 \, \left(  \left\| x \right\| - \left\| (y_u,\,  y_s) \right\| \right)  \\
  &>& 0.
\end{eqnarray*}
\end{example}

The cone condition is used to define topological disks of dimension $d_x$ that in local coordinates are `well aligned' with the strongly expanding $x$-component of $\gamma_i(N_i)$; we refer to such $d_x$-dimensional disks as horizontal disks.

\begin{definition}[Horizontal disk]
\label{def:horizontal-disk}
The $d_x$-dimensional surface $[h] \subset \mathbb{R}^n$ is a \emph{horizontal disk} in $N_i$ if in local coordinates it is a graph over the $x$-component and it satisfies a cone condition. More formally, the following two requirements hold:
\begin{enumerate}
\item
there exists a continuous function $h : \overline{B}_{d_x} \to \overline{B}_{d_y}$ such that
\begin{displaymath}
  \gamma_i([h]) = \left(  x,\, h(x) \right),
\end{displaymath}
and
\item
for every $x_1, x_2 \in \overline{B}_{d_x}$ with $x_1 \neq x_2$ we have
\begin{displaymath}
  \left( x_1,\, h(x_1) \right) \in {\cal C}_{\gamma_i}(x_2,\,  h(x_2)).
\end{displaymath}
\end{enumerate}
We say that $[h]$ is a horizontal disk in a family of h-sets $\{ N_i \}_{i \in I}$ if there exists $k \in I$ such that $[h]$ is a horizontal disk in $N_k$.
\end{definition}

The key tool needed to prove Theorem~\ref{th:main} is a result from~\cite{MR2494688}, which we state here using our notation.

\begin{theorem}[{\cite[Thm 7]{MR2494688}}]
\label{th:disk-propagation}
Suppose $f : \mathbb{R}^n \to \mathbb{R}^n$, with $n \geq 3$, is a diffeomorphism and we have a finite sequence
\begin{displaymath}
  N_{i_0} \cover{f} N_{i_1} \cover{f} \ldots \cover{f} N_{i_k}
\end{displaymath}
of covering relations $\{ N_{i_m} \}_{i_m \in I} \subset \mathbb{R}^n$, such that $f$ satisfies cone conditions from $N_{i_{m-1}}$ to $N_{i_m}$ for $m = 1, \ldots, k$. Then, if $[h_{i_0}]$ is a horizontal disk in $N_{i_0}$, there exists a horizontal disk $[h_{i_k}]$ in $N_{i_k}$, such that
\begin{displaymath}
  [h_{i_k}] = \left\{ f^{k}(z) \bigm|  z \in [h_{i_0}] \ \text{and} \ f^m(z) \in N_{i_m} \ \text{for} \ m = 1, \ldots, k \right\}.
\end{displaymath}
\end{theorem}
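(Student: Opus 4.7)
The plan is to argue by induction on $k$, reducing the theorem to its single-step version: if $N_i \cover{f} N_j$ with cone conditions from $N_i$ to $N_j$, and $[h]$ is a horizontal disk in $N_i$, then the set
\begin{displaymath}
  [h'] := \bigl\{ f(z) \bigm| z \in [h] \text{ and } f(z) \in N_j \bigr\}
\end{displaymath}
is a horizontal disk in $N_j$. Composing this statement along the covering chain yields the required disk $[h_{i_k}]$ together with the explicit description as iterates of $[h_{i_0}]$ that remain in the sequence.

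For the single step I would work in local coordinates and study the map $\phi : \overline{B}_{d_x} \to \mathbb{R}^{d_x} \times \mathbb{R}^{d_y}$ given by $\phi(x) := f_{ji}(x, h(x))$. The candidate graph $h_j : \overline{B}_{d_x} \to \overline{B}_{d_y}$ would be defined by $h_j(x_*) := \pi_y(\phi(x))$ where $x$ is determined by $\pi_x(\phi(x)) = x_*$, so two things need to be established: bijectivity of $\pi_x \circ \phi$ onto $\overline{B}_{d_x}$ on the appropriate domain, and the cone condition on the resulting graph.

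Injectivity and the graph's cone condition both fall out of the cone condition on $f$. For distinct $x_1, x_2 \in \overline{B}_{d_x}$, the horizontal-disk property places $(x_1, h(x_1)) \in \mathcal{C}_{\gamma_i}(x_2, h(x_2))$, so by \eqref{eq:cc2} the images $\phi(x_1)$ and $\phi(x_2)$ lie in each other's cones in $N_j$-coordinates. This translates into the strict inequality
\begin{displaymath}
  \bigl\|\pi_x(\phi(x_1)) - \pi_x(\phi(x_2))\bigr\|_{d_x} > \bigl\|\pi_y(\phi(x_1)) - \pi_y(\phi(x_2))\bigr\|_{d_y},
\end{displaymath}
which forces $\pi_x \circ \phi$ to be injective and simultaneously certifies the cone condition for $h_j$ once surjectivity is known.

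The technical core, and the main obstacle, is surjectivity of $\pi_x \circ \phi$ onto $\overline{B}_{d_x}$, which is where the topological content of the covering relation enters. I would fix $x_* \in \mathrm{int}\,\overline{B}_{d_x}$ and set up a Brouwer degree argument along the homotopy $\varsigma$ from Definition~\ref{def:covering}, considering the family of maps $\Phi_t : \overline{B}_{d_x} \to \mathbb{R}^{d_x}$, $\Phi_t(x) := \pi_x(\varsigma(t, x, h(x)))$. At $t = 1$ this reduces to the linear map $x \mapsto A x$, which by condition~(II) carries $\partial \overline{B}_{d_x}$ outside $\overline{B}_{d_x}$ and therefore has nonzero local degree at $x_*$. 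The two boundary conditions in Definition~\ref{def:covering} — namely that $\varsigma(t, \gamma_i(N_i^-))$ stays out of $N_{\gamma_j}$ and that $\varsigma(t, N_{\gamma_i})$ stays away from $\gamma_j(N_j^+)$ — are precisely what prevent a preimage of $x_*$ from being created or destroyed during the homotopy, either through the exit boundary in the $x$-direction or by the $y$-component escaping $\overline{B}_{d_y}$. Carefully combining these two exclusions to keep the local degree homotopy-invariant and nonzero all the way down to $t = 0$ is the delicate step; once it is carried out, it produces the required $x \in \overline{B}_{d_x}$ with $\phi(x) \in N_{\gamma_j}$ and $\pi_x(\phi(x)) = x_*$, and the induction closes.
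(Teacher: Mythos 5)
The paper contains no proof of this statement: Theorem~\ref{th:disk-propagation} is imported verbatim from the cited reference \cite{MR2494688}, so there is no in-paper argument to compare against. Your architecture --- induct on the chain so that everything reduces to a single covering step; extract injectivity of $\pi_x\circ\phi$ and the cone property of the new graph from the cone condition \eqref{eq:cc2}; obtain surjectivity from a degree argument along the homotopy $\varsigma$ of Definition~\ref{def:covering} --- is exactly the architecture of the proof in that reference, so the route is the intended one and the injectivity/cone part of your argument is complete and correct.

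The one step that fails as literally written is the homotopy invariance of $\deg(\Phi_t,\mathrm{int}\,\overline{B}_{d_x},x_*)$ on the \emph{full} ball. That invariance requires $x_*\notin\Phi_t(\partial\overline{B}_{d_x})$ for every $t$, and the covering conditions do not give this pointwise: for $x\in\partial\overline{B}_{d_x}$, the exit condition only forces $\varsigma(t,x,h(x))$ out of the product $\overline{B}_{d_x}\times\overline{B}_{d_y}$, and its $x$-component may still equal $x_*$ provided its $y$-component has escaped $\overline{B}_{d_y}$. So your claim that the two boundary conditions ``prevent a preimage of $x_*$ from being created or destroyed'' is not true for $\Phi_t$ on $\overline{B}_{d_x}$; preimages can enter through the boundary, just not ones whose full image lies in $N_{\gamma_j}$. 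The repair is to localise the degree to the open set $\Omega_t=\{x\in\mathrm{int}\,\overline{B}_{d_x} \mid \pi_y\varsigma(t,x,h(x))\in\mathrm{int}\,\overline{B}_{d_y}\}$ and track $\deg(\Phi_t,\Omega_t,x_*)$: the solution set cannot meet $\partial\Omega_t$, because a solution with $x\in\partial\overline{B}_{d_x}$ has $\pi_y\varsigma_t\notin\overline{B}_{d_y}$ (hence lies outside $\overline{\Omega_t}$), while a solution with $x\in\mathrm{int}\,\overline{B}_{d_x}$ and $\pi_y\varsigma_t\in\partial\overline{B}_{d_y}$ would place $\varsigma_t(x,h(x))$ in $\gamma_j(N_j^+)$, which the entry-set condition of Definition~\ref{def:covering} forbids. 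Generalised homotopy invariance then carries the nonzero degree of $A$ at $t=1$ (where $\Omega_1=\mathrm{int}\,\overline{B}_{d_x}$) down to $t=0$, and the solution it produces automatically has $y$-image inside $\overline{B}_{d_y}$, which is what you need for $\phi(x)\in N_{\gamma_j}$. With that localisation your induction closes; the rest of the proposal is sound.
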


\section{Proof of Theorem~\ref{th:main}}
\label{sec:main-results}
To recap the setting, we consider a diffeomorphism $f$ on $\mathbb{R}^n$, a set $U\subset \mathbb{R}^n$ whose invariant set \[
\Lambda=\mathrm{Inv}(f,U):=\left\{z: f^k(z)\in U \mbox{ for all } k\in\mathbb{Z} \right\}
\] is hyperbolic and transitive, and a finite family of h-sets $\{ N_i \}_{i \in I}\subset U$, each of which with topological exit coordinates $x\in \mathbb{R}^{d_x}$ and topological entry coordinates $y = (y_u,\, y_s) \in \mathbb{R}^{d_y}$ as given by the respective local coordinate changes $\gamma_i$. Moreover, each h-set $N_i$ is equipped with cones, as given by~\eqref{eq:cone-def-2} in local coordinates and by~\eqref{eq:Ci-def} in the original coordinates. We also consider a second family of h-sets $\{ M_l \} _{l \in L}\subset U$ of `mother sets'. \correction{The mother sets will not cover any other h-sets, but will be covered by the h-sets from the family $\{ N_i \}$. We assume that the dimensions $d_x$ of the topological exit and $d_y$ of the topological entry are the same for all the h-sets}. The claim of Theorem~\ref{th:main} is that $\Lambda$ is a $d_y$-blender, provided conditions~\emph{(B1)} and~\emph{(B2)} are satisfied.

Before we present the proof of Theorem~\ref{th:main}, we provide some intuition behind conditions~\emph{(B1)} and~\emph{(B2)}. The local coordinates for the coverings and cone conditions in~\emph{(B1)} and~\emph{(B2)} are in terms of a splitting $x \in \mathbb{R}^{d_x}$ and $y \in \mathbb{R}^{d_y}$. The hyperbolic nature of the invariant set, on the other hand, is in terms of the associated unstable and stable fibres of dimensions $d_u$ and $d_s$, respectively. We will sometimes use the phrase that conditions~\emph{(B1)} and~\emph{(B2)} hold with $(d_x,\, d_y,\, d_u,\, d_s)$ to state these dimensions specifically. We require \correction{$d_y > d_s$}, which implies that the $y$-component in local coordinates can be split into $y = (y_u,\, y_s)$, such that the unstable/expanding coordinates are $(x,\, y_u) \in \mathbb{R}^{d_u}$ and the stable/contracting coordinates are $y_s \in \mathbb{R}^{d_s}$.
\begin{figure}[t!]
  \centering
  \includegraphics{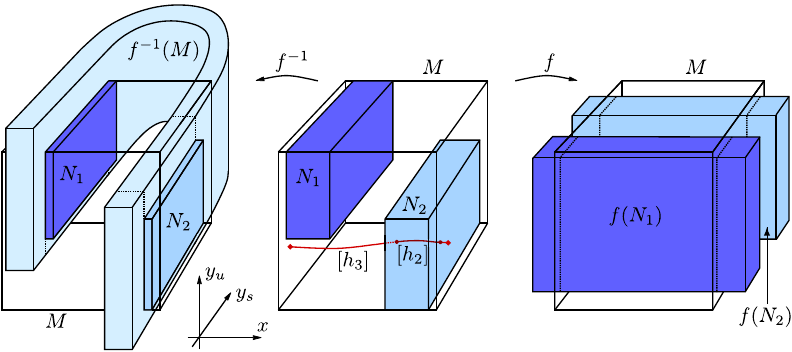}
  \caption{\label{fig:blender_modified}
    A modification of the affine blender from Figure~\ref{fig:blender_affine} so that $N_1 \cover{f} M$ and $N_2 \cover{f} M$; also shown is a horizontal disk $[h_3]$ (red curve) of $M$, which generates a horizontal disk $[h_2]$ (dark red curve) of $N_2$.}
\end{figure}

\begin{remark}
The horizontal disks $[h]$ from condition~(B1) play the role of the set ${\cal S}$ of \correction{disks} from Definition~\ref{def:blender}. More specifically, for the hyperbolic set $\Lambda$, we show that each horizontal disk $[h]$ in $\left\{N_i \right\}_{i\in I}$ intersects  \correction{$W^s_{\mathrm{loc}}(\Lambda)$}. Note that dimensions of the horizontal disks $[h]$ and the stable manifold of $\Lambda$ do not add up to the dimensions of the phase space, which is the characterising feature of a blender.
\end{remark}
%
In $\mathbb{R}^{3}$, the horizontal disks are (one-dimensional) curve segments and an example segment is shown in Figure~\ref{fig:blender_affine}(b); \correction{here $d_y>d_s$}. On the other hand, in Figure~\ref{fig:covering}(b), for \correction{$d_y= d_s$}, conditions \emph{(B1)} and~\emph{(B2)} are trivially satisfied by the general statement that $\Lambda  = W^u(\Lambda) \cap W^s(\Lambda)$. In fact $\Lambda$ could be a fixed point in this case, but to have a blender we must have \correction{$d_y> d_s$}, which means that this case is ruled out and not part of our considerations. Figure~\ref{fig:blender_modified} shows how the example of the affine blender from Section~\ref{sec:blender-aff} and Figure~\ref{fig:blender_affine} can be modified to fit into the framework of Theorem~\ref{th:main}. A suitable small reduction of the set $M$ achieves that the h-sets $N_1$ and $N_2$ cover $M$, as is shown in Figure~\ref{fig:blender_modified} with sketches of images and preimages. There, $N_1 \cover{f} M$ and $N_2 \cover{f} M$, which ensures~\emph{(B2)}. Condition~\emph{(B1)} is also satisfied, since every horizontal disk $[h_3]$ in $M $, which is a curve segment aligned with the $x$-direction here, must intersect $N_1$ or $N_2$. The intersection set is, hence, a horizontal disk in $N_1$ or $N_2$, provided the cones in $M$ are at least as sharp as the cones in the sets $N_i$. As is sketched in Figure~\ref{fig:B1}(a) for $[h_2]$ in $N_2$, this can be ensured by an appropriate choice of local coordinates; Figure~\ref{fig:B1}(b) shows an example for a more general h-set $N_i$.
%
\begin{figure}[t!]
  \centering
  \includegraphics{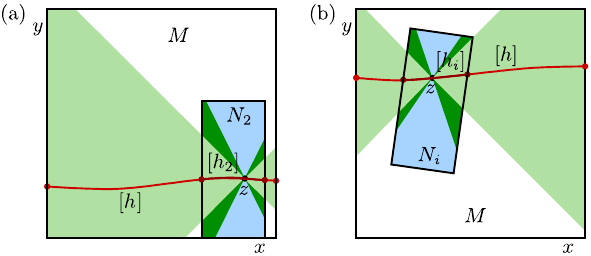}
  \caption{\label{fig:B1}
    To satisfy conditions \emph{(B1)} and~\emph{(B2)}, the cones in $M$ (light green), need to be contained in the cones in $N_i$ (dark green) at every point of $z \in [h_i]$; this is illustrated in panel~(a) for $N_2 \subset M$ of the affine example from Figure~\ref{fig:blender_modified}, and in panel~(a) for a general h-set $N_i \subset M$.}
\end{figure}

The following notion of a wall is used as a building block for the proof of Theorem~\ref{th:main}, and it is illustrated in Figure~\ref{fig:wall} for the affine case from Section~\ref{sec:blender-aff}.

\begin{figure}[t!]
  \centering
  \includegraphics{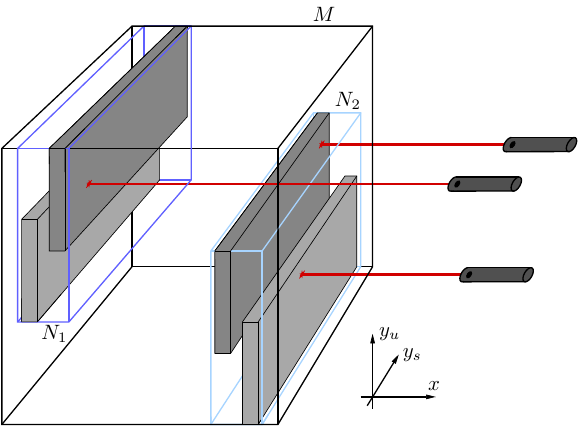}
  \caption{\label{fig:wall}
    Illustration of a wall in $\{ N_1,\, N_2,\, M \}$ formed by $f^{-2}(M) \cap M$ (light- and dark-grey sets). Any laser beam aligned with the $x$-direction, even if it were to take a slightly curvy path, cannot pass through the wall; compare with Figure~\ref{fig:blender_affine}(b).}
\end{figure}

\begin{definition}[Wall for a family of h-sets]
\label{def:wall}
\correction{A set $A \subset \mathbb{R}^n$ is a \emph{wall} for the two families of h-sets $\{ N_i \} _{i \in I} $ and $\{ M_l \} _{l \in L} $ if every horizontal disk $[h]$ in  $\{ N_i \}_{i \in I} $ intersects $A$ and also every horizontal disk $[h]$ in  $\{ M_l \}_{l \in L} $ intersects $A$.} 
\end{definition}

Figure~\ref{fig:wall} shows a mother set $M$ with two subsets, denoted $N_1$ and $N_2$; see also Figure~\ref{fig:blender_modified}. Any horizontal disk $[h] $ in $ \{ N_1,\, N_2,\, M \}$, which are curve segments aligned with the $x$-direction, intersects at least one of the light- and dark-grey boxes in $f^{-2}(M) \cap M$. Hence, $f^{-2}(M) \cap M$ is a wall for $\{ N_1,\, N_2,\, M \}$. The following two lemmas emphasise useful consequences of having a wall for a family of h-sets $\{ N_i \} _{i \in I}$  \correction{and $\{ M_l \} _{l \in L}$} and how to construct a wall if conditions \emph{(B1)} and~\emph{(B2)} from Theorem~\ref{th:main} are satisfied.

\begin{lemma}
\label{lem:wall-propagation}
If $A$ is a wall for the \correction{two families of h-sets $\{ N_i \} _{i \in I}$ and $\{ M_l \} _{l \in L}$,} and conditions (B1) and~(B2) from Theorem~\ref{th:main} are satisfied then
\begin{displaymath}
  \mathcal{G}\left( A \right) := f^{-1}\left( A \right) \cap \correction{\left(\bigcup N_i \cup \bigcup M_l \right)}
\end{displaymath}
is also a wall for $\{ N_i \} _{i \in I}$ \correction{and $\{ M_l \} _{l \in L}$}.
\end{lemma}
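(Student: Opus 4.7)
The plan is to fix an arbitrary horizontal disk $[h]$ in the family $\{N_i\}_{i\in I}$ and produce a point $z\in [h]\cap \mathcal{G}(A)$. By Definition~\ref{def:horizontal-disk} there is some $k\in I$ for which $[h]$ is a horizontal disk in $N_k$, so I would split the argument into the two cases $k\in I\setminus L$ and $k\in L$, handling the former directly and reducing the latter to it via condition~\emph{(B1)}.

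For the main case $k\in I\setminus L$, condition~\emph{(B2)} supplies $j\in I$ with $N_k\cover{f} N_j$ together with cone conditions from $N_k$ to $N_j$. Theorem~\ref{th:disk-propagation}, applied to the length-one covering sequence $N_k\cover{f} N_j$, then yields a horizontal disk in $N_j$ of the explicit form
\[
[h']=\bigl\{\,f(z)\;\bigm|\;z\in [h],\ f(z)\in N_j\,\bigr\}.
\]
Since $[h']$ is in particular a horizontal disk in $\{N_i\}_{i\in I}$ and $A$ is a wall for this family, there exists $w\in [h']\cap A$. Writing $w=f(z)$ for the corresponding $z\in [h]$, we have $z\in N_k\subset\bigcup N_i$ and $f(z)=w\in A$, so that $z\in f^{-1}(A)\cap\bigcup N_i=\mathcal{G}(A)$, whence $z\in [h]\cap\mathcal{G}(A)$.

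For the case $k\in L$, condition~\emph{(B1)} furnishes $i\in I\setminus L$ such that $[h]\cap N_i$ is itself a horizontal disk in $N_i$. The previous argument, applied to $[h]\cap N_i$ in place of $[h]$, produces a point $z\in ([h]\cap N_i)\cap\mathcal{G}(A)\subset [h]\cap\mathcal{G}(A)$, completing the verification that every horizontal disk in $\{N_i\}_{i\in I}$ meets $\mathcal{G}(A)$. I do not anticipate a significant obstacle: the key enabling step is the invocation of Theorem~\ref{th:disk-propagation} in the single-covering case, whose conclusion immediately identifies the pulled-back point $z\in [h]$ with $f(z)\in A\cap N_j$; and crucially, condition~\emph{(B1)} delivers an index already in $I\setminus L$, so the mother-set case reduces to the main case in a single step with no induction required.
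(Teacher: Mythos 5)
Your proposal is correct and follows essentially the same route as the paper's proof: the case $k\in I\setminus L$ is handled by applying Theorem~\ref{th:disk-propagation} to the single covering $N_k\cover{f}N_j$ supplied by \emph{(B2)} and pulling back a wall-intersection point, and the case $k\in L$ is reduced to it via \emph{(B1)}.
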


\begin{proof}
First, let us take a horizontal disk $[h_i]$ in $N_i$, for any $i\in I$. Our objective is to show that $[h_i] \cap \mathcal{G}\left( A \right) \neq \emptyset$. By~\emph{(B2)} we have $N_i \cover{f} N_j$ for some $j \in I$ and $f$ satisfies cone conditions from $N_i$ to $N_j$, \correction{or $N_i \cover{f} M_l$ and $f$ satisfies cone conditions from $N_i$ to $M_l$}. \correction{In the first case that $N_i \cover{f} N_j$}, by Theorem~\ref{th:disk-propagation} (applied for $k=1$) we obtain a horizontal disk $[h_j]$ in $N_j$ such that
\begin{displaymath}
  [h_j] =\left\{ f(z)  \bigm| z \in [h_i] \mbox{ and } f(z)\in N_j\right\} .
\end{displaymath}
Since $A$ is a wall, there exists $z \in [h_j] \cap A \neq \emptyset$, which implies that $f^{-1}(z) \in f^{-1}(A) \cap N_i \subset \mathcal{G}\left( A \right) $; hence, $[h_i] \cap \mathcal{G}\left( A \right) \neq \emptyset$, as required. \correction{In the second case that $N_i \cover{f} M_l$, we have a mirror argument:  by  Theorem~\ref{th:disk-propagation}
\begin{displaymath}
  [h_l] =\left\{ f(z)  \bigm| z \in [h_i] \mbox{ and }f(z)\in M_l\right\}
\end{displaymath}
is a horizontal disk in $M_l$; since $A$ is a wall, it intersects $M_l$, which implies that $[h_i] \cap \mathcal{G}\left( A \right) \neq \emptyset$.
}

Now consider a horizontal disk $[h]$ in $M_l$ for some $l \in L$. Our objective is to show that $[h] \cap \mathcal{G}\left( A \right) \neq \emptyset$. From condition~\emph{(B1)} we can choose $i\in I$ and a horizontal disk $[h_i]$ in $N_i$ such that $[h] \cap N_i = [h_i]$. 
As in the first part of the proof, we that conclude $[h_i] \cap \mathcal{G}\left( A\right) \neq \emptyset$; since $[h_i] \subset [h]$, this implies that $[h] \cap \mathcal{G}\left( A \right) \neq \emptyset$, as required.
\end{proof}

\begin{lemma}
\label{lem:Wall}
Let \correction{$\{ N_i \} _{i \in I}$ and $\{ M_l \} _{l \in L}$ be two families} of h-sets in $\mathbb{R}^n$ for which conditions~(B1) and~(B2) from Theorem~\ref{th:main} are satisfied. Then the set
\begin{displaymath}
  A = \left\{ z \in \mathbb{R}^n \Bigm| f^{k}(z) \in \correction{\left( \bigcup N_i\cup \bigcup M_l\right)} \text{ for all} \ k \in \mathbb{N} \right\}
\end{displaymath}
is a wall for $\{ N_i \} _{i \in I}$\correction{ and $\{ M_l \} _{l \in L}$}.
\end{lemma}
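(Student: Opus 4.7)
The plan is to approximate $A$ from above by a decreasing sequence of auxiliary sets $A_k$, show that each $A_k$ is a wall by appealing to Lemma~\ref{lem:wall-propagation}, and then conclude via a finite-intersection compactness argument applied to any horizontal disk.

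First, I would set $A_0 := \bigcup_{i \in I} N_i$ and iteratively define $A_{k+1} := \mathcal{G}(A_k) = f^{-1}(A_k) \cap \bigcup N_i$. The set $A_0$ is trivially a wall for $\{ N_i \}_{i \in I}$, since any horizontal disk $[h]$ is, by definition, contained in some $N_k \subset A_0$. Induction combined with Lemma~\ref{lem:wall-propagation} then shows that every $A_k$ is a wall. Unfolding the recursion identifies $A_k$ with the set $\{ z : f^{j}(z) \in \bigcup N_i \text{ for } j = 0, 1, \ldots, k \}$, so $\bigcap_{k \geq 0} A_k$ coincides with $A$ on $\bigcup N_i$; this is all that is needed, because every horizontal disk is already contained in $\bigcup N_i$.

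Next, I would fix an arbitrary horizontal disk $[h]$ in $\{ N_i \}_{i \in I}$. Each $A_k$ is closed, being a finite intersection of preimages of the closed set $\bigcup N_i$ under the continuous diffeomorphism $f$. The horizontal disk $[h]$ is compact, as the continuous image of $\overline{B}_{d_x}$ under the map $x \mapsto \gamma_i^{-1}(x,\, h(x))$. Consequently, $\{ [h] \cap A_k \}_{k \geq 0}$ is a decreasing sequence of nonempty closed subsets of the compact set $[h]$, and the finite intersection property yields $[h] \cap A = \bigcap_{k \geq 0} ([h] \cap A_k) \neq \emptyset$. Since $[h]$ was arbitrary, $A$ is a wall, as claimed.

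I do not anticipate any significant obstacle: the substantive work has been done in Lemma~\ref{lem:wall-propagation}, and the present lemma amounts essentially to passage to the intersection. The only care required is to verify that each $A_k$ is closed and that horizontal disks are compact, so that the nested compact sets argument is legitimate; both of these facts follow directly from the definitions.
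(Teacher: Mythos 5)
Your proposal is correct and follows essentially the same route as the paper: the same auxiliary sequence $A_0 = \bigcup N_i$, $A_{k+1} = \mathcal{G}(A_k)$, the same inductive application of Lemma~\ref{lem:wall-propagation}, and a compactness argument to pass to the limit. The only difference is cosmetic: where the paper extracts a convergent subsequence of points $z_\ell \in [h] \cap A_\ell$, you observe that the $A_k$ are nested, closed, and meet $[h]$ nontrivially, and invoke the finite intersection property --- arguably a slightly cleaner formulation of the same compactness step.
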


\begin{proof}
By definition, $\bigcup N_i\correction{\, \cup \, \bigcup M_l}$ is trivially a wall in $\{ N_i\}_{i \in I}$ \correction{and $\{ M_l \} _{l \in L}$}. We now apply Lemma~\ref{lem:wall-propagation} inductively to obtain a sequence of walls $A_\ell$ for $\ell \in \mathbb{N}$, with
\begin{align*}
    A_0 &:= \bigcup N_i, \quad \mbox{and} \\
    A_{\ell+1} &:= \mathcal{G}\left( A_\ell \right).
  \end{align*}
Since $I$ \correction{and $L$ are} finite, the union $\bigcup N_i\correction{\, \cup \, \bigcup M_l}$ is compact and the limit $A_\infty = {\displaystyle \lim_{\ell \to \infty} A_\ell}$ exists and is not empty. More precisely, every sequence $\{ z_\ell \}_{\ell \in [0, \infty)}$ with $z_\ell \in A_\ell$ for all $\ell \in [0, \infty)$ has a convergent subsequence
\begin{displaymath}
  \{ z_{\ell_m} \}_{m = 0}^\infty \ \text{with} \ z_{\ell_m} \in A_{\ell_m}  \ \text{and} \
  \lim_{m \to \infty} z_{\ell_m} = z_\infty \in A_\infty.
\end{displaymath}
We proceed by showing that $A_{\infty}$ is a wall. Take a horizontal disk $[h]$ in \correction{$\{ N_i\}$ or $\{ M_l\}$}. Since the $A_\ell$ are walls, we have a sequence of points
\begin{displaymath}
  z_\ell \in [h] \cap A_\ell \subset \bigcup N_i\correction{\, \cup \, \bigcup M_l}.
\end{displaymath}
Again, the sets $\bigcup N_i\correction{\, \cup \, \bigcup M_l}$ and $[h]$ are compact, so we can choose a convergent subsequence $z_{\ell_m}$ and its limit ${\displaystyle \lim_{m \to \infty} z_{\ell_m}} \in [h] \cap A_\infty$, as required.

Finally, by construction, for every point $z \in A_\infty$ we have
\begin{displaymath}
  f^k(z) \in \bigcup N_i \correction{\, \cup \, \bigcup M_l} \quad \text{for every} \ k \geq 0,
\end{displaymath}
which means that $A_\infty \subset A$ and, hence, $A$ is a wall as well.
\end{proof}

We now proceed with the actual proof of Theorem~\ref{th:main}. Recall that we assume that there exist \correction{ two finite families of h-sets $\{ N_i \}_{i \in I} \subset U$ and  $\{ M_l \}_{l \in L} \subset U$} for a diffeomorphism $f : \mathbb{R}^n \to \mathbb{R}^n$, with $n \geq 3$, such that conditions~\emph{(B1)} and~\emph{(B2)} hold with $(d_x,\, d_y, \,d_u,\, d_s)$ and $d_y > d_s$.

\begin{proof}[Proof of Theorem \ref{th:main}]
The goal is to show that the transitive hyperbolic invariant set $\Lambda=\mathrm{inv}(f, U)$ is a $d_y$-blender.
Clearly
\begin{align}
  A & = \left\{ z \in \mathbb{R}^n \Bigm| f^{k}(z) \in \bigcup N_i \correction{\, \cup \, \bigcup M_l}\ \text{for all} \ k \in \mathbb{N} \right\} \label{eq:A-in-Ws}   \\
  & \subset  \left\{ z \in \mathbb{R}^n \Bigm| f^{k}(z) \in U\ \text{for all} \ k \in \mathbb{N} \right\}  \subset \correction{W^s_{\mathrm{loc}}(\Lambda)}. \nonumber
\end{align}
Hence \correction{$W^s_{\mathrm{loc}}(\Lambda)$} is a wall, because $A$ is already a wall, by Lemma~\ref{lem:Wall}. This means that for every horizontal disk $[h]$ in $\{ N_i \}_{i \in I}$ \correction{or $\{M_l\}$} we have
\begin{displaymath}
  [h] \cap \correction{W_{\mathrm{loc}}^s(\Lambda)}\neq \emptyset.
\end{displaymath}
In particular, this is also true for all $C^1$ horizontal disks. Therefore, we can take the set of surfaces $\mathcal{S}$ from Definition~\ref{def:blender} to be the family of all $C^1$ horizontal disks $[h]$ in $\{ N_i \}_{i \in I}$. Horizontal disks are $d_y$-dimensional, and $d_y > d_s$, so it follows that $\Lambda$ is a $d_y$-blender, which concludes our proof.
\end{proof}

\begin{remark}
\label{rem:inv-non-empty}
Conditions~(B1) and~(B2) ensure that $\Lambda$ is non-empty; this is guaranteed by~\eqref{eq:A-in-Ws}, i.e., the fact that $A \subset \correction{W^s_{\mathrm{loc}}(\Lambda)}$, and $A \neq \emptyset$.
\end{remark}

\correction{Theorem \ref{th:main} can be reformulated to avoid the use of mother sets.}
\correction{\begin{theorem}
\label{cor:main}
Let $f : \mathbb{R}^n \to \mathbb{R}^n$ be a diffeomorphism, with $n \geq 3$, and assume that there exists a set $U\subset \mathbb{R}^n$ such that its maximal invariant set $\Lambda$ is hyperbolic and transitive.
Consider a finite family of h-sets $\{ N_i \}_{i \in I}\subset U$, for which the following condition is satisfied:
\begin{itemize}
\item[(B)] For every $l \in I$ and every horizontal disk $[h]$ in $N_l$, there exist $i,j \in I$ such that $[h] \cap N_i$  is a horizontal disk in $N_i$, $N_i \cover{f} N_j$, and $f$ satisfies cone conditions from $N_i$ to $N_j$.
\end{itemize}
If, in addition, the dimension of the topological entry of the h-sets is larger than the dimension of the stable bundle of $\Lambda$, then $\Lambda$ is a blender.
\end{theorem}}
\correction{\begin{proof} Consider two finite families of h-sets $\mathcal{M}$ and $\mathcal{N}$ chosen as
\begin{align*}
	\mathcal{M} &= \left\{N_l \in \{N_i\} \bigm| \mbox{ the set } N_l\mbox{ does not cover any set from the family } \{N_i\}  \right\},\\
	\mathcal{N} &= \{N_i\} \setminus \mathcal{M}.
\end{align*}
From {\em (B)} it follows that the two families $\mathcal{N}$ and $\mathcal{M}$ satisfy {\em (B1)} and {\em (B2)}, with $\mathcal{M}$ playing the role of the family of mother sets. Hence, the claim follows from Theorem \ref{th:main}.
\end{proof}

This proof shows that mother sets arise naturally as part of the covering, which is why we find it intuitive to work with them in the first place in the formulation of Theorem \ref{th:main}.}

\subsection{Finding blenders in practice}
\label{sec:compsetup}
Notice that conditions~\emph{(B1)} and~\emph{(B2)} are formulated in terms of finite \correction{families of sets $\{ N_i \}_{i \in I},\{M_l\}_{l\in L} \subset \mathbb{R}^n$, and} it suffices to construct finite sequences of coverings
\begin{equation}
\label{eq:connecting-sequence}
  N_{i_0} \cover{f} N_{i_1} \cover{f} \ldots \cover{f} N_{i_k}=M_{l^\prime},
\end{equation}
for some $l^\prime \in L$, \correction{for~\emph{(B1)} and~\emph{(B2)} to hold}. Considering an arbitrary horizontal disk $[h]$ of some mother set $M_l$, we take $N_{i_0} \subseteq M_l$ as the initial h-set that intersects with $[h]$ and construct a covering sequence that satisfies cone conditions. The length and the choice of the h-sets involved in this covering sequence and the choice of $l^\prime \in L$ generally depends on the choice of $M_l$ and the choice of $[h]$. Intuitively, a sequence of the form~\eqref{eq:connecting-sequence} allows us to establish a `link' between a horizontal disk $[h]$ in a mother set $M_l$ and a possibly different mother set $M_{l^\prime}$.  For each such sequence, the set $M_{l^\prime}$ is placed at the end. Such a sequence can be constructed as a covering sequence of h-sets with cone conditions precisely if conditions~\emph{(B1)} and~\emph{(B2)} hold for some  $(d_x,\, d_y, \,d_u,\, d_s)$ with $d_y > d_s$.

\begin{figure}[t!]
  \centering
  \includegraphics{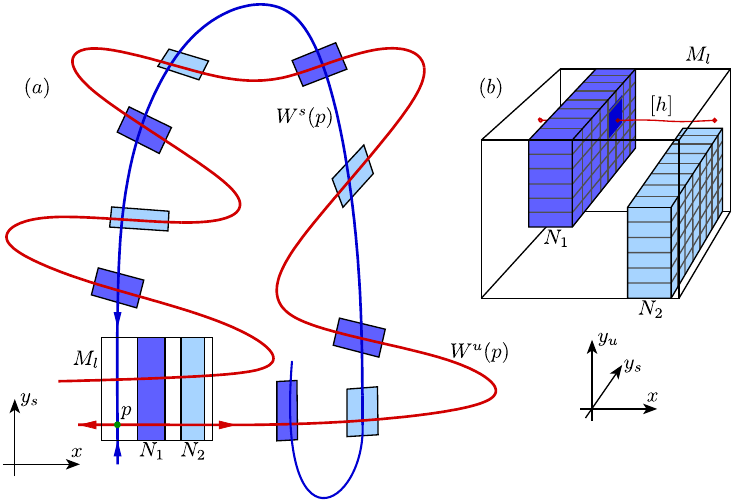}
  \caption{\label{fig:coverblender}
    Construction of a covering sequence. Panel~(a) shows an example where an  h-set $M_l$ near a hyperbolic fixed point is covered by an appropriate iterate of an h-set $N_1 \subset M_l$. As panel~(b) illustrates, the h-set $N_1$ itself is divided into overlapping smaller h-sets; such a subset is shown in darker blue with a horizontal disk $[h]$ through it.}
\end{figure}

\begin{remark}
\label{rem:sequences}
In fact, conditions~(B1) and~(B2) ensure that we are able to find sequences of the form~\eqref{eq:connecting-sequence}. By~(B1) we can always start from some h-set $N_{i_0}$ that, in a good way, intersects a horizontal disk $[h]$ from a mother set $M_l$. Condition~(B2) ensures for any given h-set $N_i$ $f$-covers some other h-set. Hence, a covering starting from $N_{i_0}$ either $f$-covers some $M_{l^{\prime}}$ in one step, or it can be continued to a longer sequence of coverings of the form~\eqref{eq:connecting-sequence}. Condition~(B2) also ensures that each such sequence ends with a mother set $M_{l^\prime}$ for some $l^\prime \in L$. (We cannot have infinite sequences since for each covering, the sets need to be enlarged along the $y_u$-coordinate; an infinite sequence of coverings would lead to a blow-up.) Thus, we start in a horizontal disk in a mother set $M_l$ and finish in this or another mother set $M_{l^\prime}$.
\end{remark}

As an example, Figure~\ref{fig:coverblender} illustrates how such a covering can be found near a homoclinic orbit to a hyperbolic fixed point $p$. The mother set $M_l$ in panel~(a) is chosen such that it contains $p$ and part of its stable and unstable manifolds, denoted $W^u(p)$ and $W^s(p)$, respectively. Near a homoclinic orbit, the sequence of h-sets $N_i$, with $N_{i_0} \in \{N_1,N_2\} \subset M_l$, can be chosen such that, after a sequence of coverings, the h-set $N_{i_k}$, for some $i_k \in I$,  returns to $M_l$; hence, $M_{l^\prime} = M_l$ in this example. Figure~\ref{fig:coverblender}(a) shows two sequences of h-sets:
one covering starts with $N_{i_0} = N_1$ (dark blue) and the other covering starts with $N_{i_0} = N_2$ (light blue). The h-sets are positioned along the intersections of the stable and unstable manifold of $p$. Successive h-sets $N_{i_m}$ in the covering sequence must be increasingly elongated along their corresponding $y_u$-coordinates, which also means that the final mother set $M_{l^\prime}$ in the sequence is chosen to be large along this coordinate; consequently, it will not cover any of the sets $N_{i_k}$ since these are smaller along $y_u$.

In practice, to establish cone conditions as required for condition~\emph{(B2)}, it is convenient to choose small initial h-sets $N_{i_0}$, as in Figure~\ref{fig:coverblender}(b), instead of the large initial h-sets like $N_1$ and $N_2$ in panel~(a). These initial h-sets need to satisfy condition~\emph{(B1)}. Thus, they need to overlap to ensure that any horizontal disk $[h]$ from $M_l$ leads to a horizontal disk in some initial $N_{i_0}$. Choosing small initial h-sets allows to use smaller intermediate h-sets in the sequences of coverings, which improves the accuracy of computations.

\section{\correction{$C^1$-persistent} heterodimensional cycles}
\label{sec:heteroclinic}
We foresee that our method can be particularly useful to locate and compute (\correction{$C^1$-persistent}) heterodimensional cycles, which are key elements for generating higher-dimen-sional (wild) chaotic dynamics~\cite{bonatti2005, LiTuraev2017, SaikiTakahasiYorke2021}. 

As a motivating example of a simple heterodimensional cycle in $\mathbb{R}^{3}$, we consider a $2$-blender $\Lambda$, with $d_s = 1$ and $d_u = 2$, and a hyperbolic fixed point $\widetilde{\Lambda} = p$ with $\widetilde{d_s} = 2$ and $\widetilde{d_u} = 1$. Hence, $p$ has a two-dimensional stable manifold $W^s(p)$ and a one-dimensional unstable manifold $W^u(p)$, which interact with the invariant manifolds of the blender $\Lambda$. %
%
\begin{figure}[t!]
  \centering
  \includegraphics{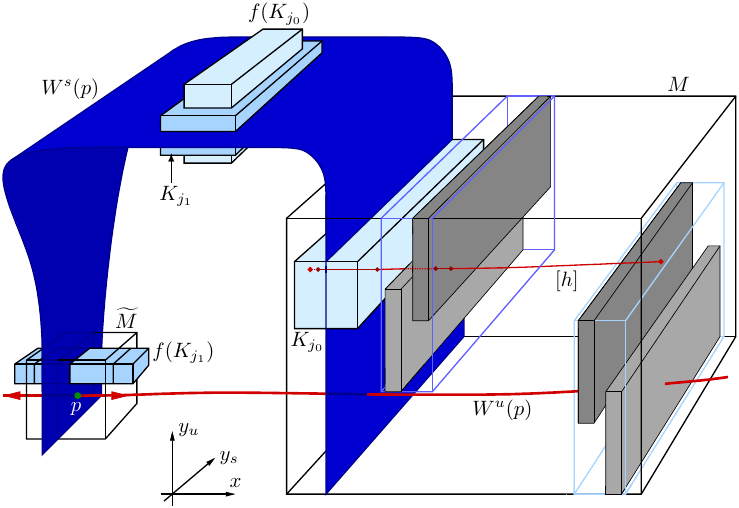}
  \caption{\label{fig:blender-point}
    Sketch of a heterodimensional cycle between a $2$-blender in $M$ and a hyperbolic fixed point $p$ in $\widetilde{M}$, featuring a connecting sequence $K_{j_0}\cover{f} K_{j_1} \cover{f} \widetilde{M}$ from $M$ to $\widetilde{M}$.}
\end{figure}
%
This situation is illustrated in Figure~\ref{fig:blender-point}, where the blender is the invariant set inside $M$ and the hyperbolic fixed point $p$ lies in $\widetilde{M}$. Figure~\ref{fig:blender-point} also shows an example of a connecting sequence of covering relations, that we require for our construction.


\correction{The above simple example is just a motivation. In general, we consider three finite families of h-sets $\{M_l\}_{l\in L}$, $\{\widetilde{M}_i\}_{i\in I}$ and $\{K_{j}\}_{j\in J}$. We assume that all of the h-sets in these three families have the same topological exit dimension $d_x$ and topological entry dimension $d_y$.}

\correction{We introduce the following definition of a connection between two families of h-sets, which is a building block for condition {\em(B3)} in Theorem \ref{th:cycles}.} 

\correction{\begin{definition}\label{def:connected}We say that the family of h-sets $\{M_l\}$ is connected to the family of h-sets $\{\widetilde{M}_i\}$ by sequences of covering relations if for every $l \in L$ and for every horizontal disk $[h]$ in $M_l$ there exists a sequence of h-sets $K_{j_0}, K_{j_1},\ldots,K_{j_r}$ and a set $\widetilde{M}_i$ (which may depend on $[h]$ and $l$) such that

\begin{enumerate}
\item $[h] \cap K_{j_0} = [h_{j_0}]$ is a horizontal disk $[h_{j_0}]$ in $K_{j_0}$;
\item There is a sequence of coverings

\begin{equation}
  K_{j_0} \cover{f} K_{j_1} \cover{f} \ldots \cover{f} K_{j_r} \cover{f} \widetilde{M}_i; \label{eq:connecting-seq-new}
\end{equation}

\item The diffeomorphism $f$ satisfies cone conditions from $K_{j_m-1}$ to $K_{j_m}$, for $m =1, \ldots, r$, as well as cone conditions from $K_{j_r}$ to $\widetilde{M}_i$.

\item The length $r$ of the sequence of coverings \eqref{eq:connecting-seq-new} is bounded from above by a constant that is independent of the choice of $[h]$ and $l$.

\end{enumerate}
\end{definition}}

\correction{Figure~\ref{fig:blender-point} shows an example of a sequence of covering relations
$K_{j_0}\cover{f} K_{j_1} \cover{f} \widetilde{M}$ needed for establishing that $M$ is connected to $\widetilde{M}$, for the case of a 2-blender in $M$ and a hyperbolic fixed point in $\widetilde{M}$.}

\correction{\begin{definition}\label{def:selfconnected}
We say that a family of h-sets $\{M_l\}\subset U$ is connected to itself in $U$, if $\{M_l\}$ is connected to itself by sequences of covering relations, and all the h-sets $K_{j_m}$ involved in the sequences of coverings  \eqref{eq:connecting-seq-new} are subsets of $U$.
\end{definition}
}
\correction{By Remark~\ref{rem:sequences} we see that if $\{M_l\}\subset U$ is a family of mother sets with an associated family of sets $\{N_i\}\subset U$ for which conditions {\em(B1)} and {\em(B2)} are fulfilled, then $\{M_l\}$ is connected to itself in $U$. (The h-sets $N_i$ then play the role of the h-sets $K_j$.)}

\correction{We now introduce the notion that two families of h-sets are {\em interconnected}, which is used to formulate condition {\em(B3)}.}
\correction{\begin{definition}\label{def:interconnected}
We say that $\{M_l\}\subset U$ and $\{\widetilde{M}_i\}\subset \widetilde{U}$ are interconnected by sequences of covering relations if:
\begin{enumerate}
\item $\{M_l\}$ is connected to $\{\widetilde{M}_i\}$ by sequences of covering relations;
\item $\{\widetilde{M}_i\}$ is connected to $\{M_l\}$ by sequences of covering relations;
\item $\{M_l\}$ is connected to itself in $U$;
\item $\{\widetilde{M}_i\}$ is connected to itself in $\widetilde{U}$.
\end{enumerate}
\end{definition}}

\correction{
Note that the sets $K_{j_m}$ involved in the covering relations \eqref{eq:connecting-seq-new} for conditions~1 and~2 of Definition~\ref{def:interconnected} do not need to be contained in $U\cup \widetilde{U}$. In conditions~3 and~4 of Definition~\ref{def:interconnected}, on the other hand, we require the h-sets involved in the sequences of covering relations to be contained in $U$ and $\widetilde{U}$, respectively.
}

\correction{We now give the proof of Theorem \ref{th:cycles}.}

\begin{proof}[Proof of Theorem \ref{th:cycles}]
We only prove that $W^u(\Lambda) \cap W^s(\widetilde{\Lambda}) \neq \emptyset$; the proof of $W_{\Lambda }^s(\Lambda) \cap W^u(\widetilde{\Lambda})$ follows from mirror arguments.

\correction{From condition~3 of Definition \ref{def:interconnected} it follows that} for every $l \in L$ and every horizontal disk $[h]$ in $M_l$ there exists a finite sequence of h-sets \correction{$\{ K_{j_m} \}\subset U$ and an h-set $M_{l^{\prime }}\subset U$ such that $[h]\cap K_{j_0}$ is a horizontal disk in $K_{j_0}$ and}
\begin{equation}
  M_l \supset \correction{K_{j_0}} \cover{f} \correction{K_{j_1}} \cover{f} \ldots \cover{f} \correction{K_{j_{k[h]}}} \cover{f} M_{l^{\prime }}.
  \label{eq:cycle-prf-tmp-1}
\end{equation}
The choice of the h-sets $\correction{K_{j_0}, K_{j_1}, \ldots,\, K_{j_{k[h]}}}$ and the choice of $k[h] \geq 1$ may depend on $[h]$. \correction{It is important to note that the h-sets involved in \eqref{eq:cycle-prf-tmp-1} are all contained in $U$.} By Theorem~\ref{th:disk-propagation} there exists a horizontal disk $[h^{\prime }] \in M_{l^{\prime }}$ such that
\begin{displaymath}
  [h^{\prime }] = \left\{  f^{k[h]}(z) \bigm|  z \in [h] \ \text{and} \ f^m(z) \in \correction{K_{j_m}\subset U} \ \text{for} \ m = 1, \ldots, k[h] \right\}.
\end{displaymath}
We wish to emphasise the dependence of the horizontal disk $[h^{\prime}] \in M_{l^{\prime}}$ on $[h] \in M_l$ and introduce the notation
\begin{displaymath}
  \mathcal{N}([h]) := [h^{\prime}].
\end{displaymath}
Hence, for $z \in \mathcal{N}([h])$ we have $f^{-m}(z) \in \correction{U}$ for $m = 1, \ldots, k[h]$, for some $k[h] \geq 1$.

Similarly, condition 4. from Definition \ref{def:interconnected} and Theorem~\ref{th:disk-propagation} imply that for every $i \in I$ and every horizontal disk $[\widetilde{h}] \in \widetilde{M}_i$, there exists $i^{\prime} \in I$ and a horizontal disk $\mathcal{\widetilde{N}}(\widetilde{h}) \in \widetilde{M}_{i^{\prime}}$, constructed from $k[\widetilde{h}]$ forward images of $[\widetilde{h}]$. \correction{Moreover, for every $z\in [\widetilde{h}]$ such that $f^{k[\widetilde{h}]}\in \mathcal{\widetilde{N}}(\widetilde{h})$ we have $f^m(z)\in \widetilde{U}$ for $m = 1, \ldots, k[\widetilde{h}]$.}

Finally, condition 1. from Definition \ref{def:interconnected} and Theorem~\ref{th:disk-propagation} imply that for every $l \in L$ and every horizontal disk $[h]$ in $M_l $ there exists $i \in I$ and a finite connecting sequence
\begin{equation}
  M_l \supset K_{j_0} \cover{f} K_{j_1} \cover{f} \ldots \cover{f} K_{j_{r[h]}} \cover{f} \widetilde{M}_{i},\label{eq:connecting-seq-new-2}
\end{equation}
with $r[h] \geq 1$, such that a horizontal disk $\mathcal{K}([h])$ in $\widetilde{M}_{i}$ can be constructed from forward images of $[h]$.

We now take an arbitrary $l \in L$ and an arbitrary horizontal disk $[h_0]$ in $M_l$, \correction{and construct the triple of horizontal disks}
\begin{displaymath}
  [h_{\ell}] :=\mathcal{N}^{(\ell)}([h_0]), \quad [\widetilde{h}_{\ell}] := \mathcal{K}([h_{\ell}]), \quad
  [\widetilde{h}_{\ell}^{\prime }] := \mathcal{\widetilde{N}}^{(\ell)}([\widetilde{h}_{\ell}]), \qquad
  \text{for } \ell \in \mathbb{N},
\end{displaymath}
by iterating $\mathcal{N}(\cdot)$ in and $\mathcal{\widetilde{N}}(\cdot)$ an arbitrary $\ell \in \mathbb{N}$ times. Consequently, we obtain a point $\widetilde{z}_{\ell}^{\prime} \in [\widetilde{h}_{\ell}^{\prime}]$ such that
\begin{enumerate}
\item
there exists $n_1(\ell) \geq \ell$ such that $\widetilde{z}_{\ell} := f^{-n_1(\ell)}(\widetilde{z}_{\ell}^{\prime }) \in [\widetilde{h}_{\ell}]$ and $f^{-m}(\widetilde{z}_{\ell}^{\prime }) \in \correction{\widetilde{U}}$ for $m = 0, \ldots, n_1(\ell)$;
\item
there exists $n_2(\ell) \geq 1$ such that $z_{\ell} := f^{-n_2(\ell)}(\widetilde{z}_{\ell}) \in [h_{\ell}]$;
\item
there exists $n_3(\ell) \geq \ell$ such that $f^{-n_3(\ell)}(z_{\ell}) \in [h_0]$ and $f^{-m}(z_{\ell}) \in \correction{U}$ for $m = 0, \ldots, n_3(\ell)$.
\end{enumerate}
Since the length of each connecting sequence \correction{\eqref{eq:connecting-seq-new-2}} is bounded by the same constant, the total number of possible coverings in the connecting sequences is finite. Therefore, we can choose $n_2 \geq 1$ fixed and select only a subsequence of triples $[h_{\ell_m}]$, $[\widetilde{h}_{\ell_m}]$ and $[\widetilde{h}_{\ell_m}^{\prime }]$ with $n_2(\ell_m) = n_2$, independently of $\ell_m$.

Recall that \correction{$\bigcup M_l$ and $\bigcup \widetilde{M}_i$} are compact, which implies that both sequences $z_{\ell_m}$ and  $\widetilde{z}_{\ell_m}$contain a convergent subsequence. Hence, we can choose a subsequence $\ell_{m_k}$ such that
\begin{displaymath}
	\lim_{k \to \infty} z_{\ell_{m_k}} = z^{\ast} \in  \correction{\bigcup M_l \subset U},
   		\quad \text{and} \quad
  	\lim_{k \to \infty} \widetilde{z}_{\ell_{m_k}} = \widetilde{z}^{\ast} \in  \correction{\bigcup \widetilde{M}_i \subset \widetilde{U}}
\end{displaymath}
where $f^{n_2}(z^{\ast}) = \widetilde{z}^{\ast}$. \correction{From the construction it follows that $f^{-k}(z^{\ast})\in U$ and $f^{k}(\widetilde{z}^{\ast})\in \widetilde{U}$ for every $k\ge 0$. Since $\Lambda=\mathrm{Inv}(f,U)$ and $f^{-k}(z^{\ast})\in U$ for all $k\ge 0$, we see that $z^{\ast}\in W^u(\Lambda )$. Similarly, since $\widetilde{\Lambda}=\mathrm{Inv}(f,\widetilde{U})$ and $f^{k}(\widetilde{z}^{\ast})\in \widetilde{U}$ for all $k\ge 0$, we see that $\widetilde{z}^{\ast}\in W^s(\widetilde{\Lambda})$. Moreover,  $f^{n_2}(z^{\ast}) = \widetilde{z}^{\ast}$,} hence, we have established the existence of a trajectory passing through $z^{\ast }$ that belongs to $W^u(\Lambda) \cap W^s(\widetilde{\Lambda})$. Therefore, $W^u(\Lambda)\cap W^s(\widetilde{\Lambda}) \neq \emptyset$, as required.
\end{proof}

Theorem~\ref{th:cycles} can be used, in particular, to establish a heterodimensional cycle between a blender $\Lambda$ and a hyperbolic fixed point $p$, as suggested in Figure~\ref{fig:blender-point}. The hyperbolic fixed point can be enclosed in a single h-set $\widetilde{M}$ with $\widetilde{M} \cover{f} \widetilde{M}$. Since for the blender $\Lambda$ we have $d_y > d_s$, the cycle is indeed heterodimensional.

\begin{remark}
Since the \correction{covering relations and cone conditions involved in condition {\em (B3)} of} Theorem~\ref{th:cycles} is \correction{persistent} under $C^1$-perturbations, our approach constitutes a new proof that the existence of a heterodimensional cycle is a $C^1$-\correction{persistent} property; see also~\cite{bonatti2012Nonlin,bonatti2005}.
\end{remark}

\section{A $2$-blender in the H{\'e}non-like family}
\label{sec:CAP}
We are finally ready to prove Theorem~\ref{th:henon}, where we claim that the H{\'e}non-like family~\eqref{eq:Henon-family} has a $2$-blender for every $\xi \in [ 1.01,\, 1.125]$ when $\mu = -9.5$ and $\beta = 0.3$. The proof precisely follows the approach described above. We construct explicit covering relations and verify the conditions of Theorem~\ref{th:main} with computer-assisted methods; we then apply Theorem~\ref{th:main}, which completes the proof. The constructed covering relations also provide an explicit bound on the set in phase space that contains the surfaces that intersect the stable manifold of the $2$-blender.

Recall that the H{\'e}non-like family is defined by the function $f : \mathbb{R}^3 \to \mathbb{R}^3$ with
\begin{displaymath}
  f\left(\mathrm{x},\, \mathrm{y},\, \mathrm{z}\right) =
  \left( \mathrm{y},\, \mu + \mathrm{y}^2 + \beta \, \mathrm{x}, \xi \, \mathrm{z}+\mathrm{y} \right),
\end{displaymath}
and we fix $\mu = -9.5$ and $\beta = 0.3$ as in Theorem~\ref{th:henon}. We assume $\xi > 1$, so that we have $d_x = 1$, and the skew-product structure of $f$ ensures that the weakly expanding local coordinate $y_u$ is the $\mathrm{z}$-direction, provided $\xi$ is sufficiently small. Here, we use phase variables $\mathrm{x}$, $\mathrm{y}$ and $\mathrm{z}$ to distinguish them from the local coordinates $x \in \mathbb{R}^{d_x}$ and $y \in \mathbb{R}^{d_y}$. As can easily be checked, $f$ has the two fixed points
\begin{displaymath}
  p^{\pm } = \left( \rho^{\pm },\, \rho^{\pm },\, \frac{\rho^{\pm }}{1-\xi }\right),
\end{displaymath}
with
\begin{displaymath}
  \rho ^{\pm} = \tfrac{1}{2} \, (1 - \beta) \pm \tfrac{1}{2} \, \sqrt{(1 - \beta)^2 - 4 \, \mu},
\end{displaymath}
which are hyperbolic saddles for $\mu = -9.5$, $\beta = 0.3$ and $\xi > 1$, with two-dimensional unstable and one-dimensional stable manifolds that intersect transversally. This follows from the fact that the restriction of $f$ to the $(\mathrm{x},\mathrm{y})$-plane is the standard H{\'e}non map, which is known to feature a full Smale horseshoe for $\mu = -9.5$ and $\beta =0.3$. Moreover, this skew-product structure of $f$ implies that the $\mathrm{z}$-direction is an $f$-invariant (weakly unstable) fibre bundle for points in $\mathbb{R}^3$. This means, in particular, that the projection of the one-dimensional stable manifold $W^s(p^+)$ of $p^+$ is exactly the stable manifold of the fixed point $(\rho^+,\, \rho^+)$ of the standard H{\'e}non map, while the two-dimensional unstable manifold $W^u(p^+)$ projects to a one-dimensional curve in the $(\mathrm{x},\mathrm{y})$-plane, namely, exactly the one-dimensional unstable manifold of $(\rho^+,\, \rho^+)$ for the standard H{\'e}non map; in other words, $W^u(p^+)$ is a ruled manifold given by the direct product of this one-dimensional unstable manifold and vertical straight lines. A top view of the fixed points $p^\pm$ with the manifolds $W^u(p^+)$ and $W^s(p^+)$ is shown in~Figure~\ref{fig:manifolds}; the manifolds of $p^-$ are not shown in this figure.
%
\begin{figure}[t!]
  \centering
  \includegraphics{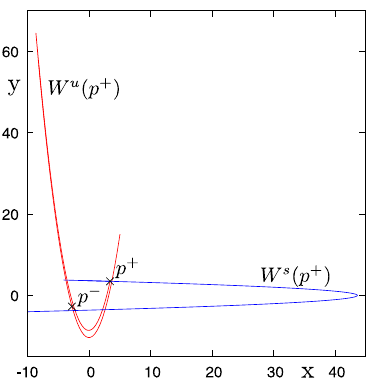}
  \caption{\label{fig:manifolds}
    Projection onto the $(\mathrm{x},\mathrm{y})$-plane of the fixed points $p^{\pm }$ (crosses) of the H{\'e}non-like map~\eqref{eq:Henon-family} with $\mu =-9.5$ and $\beta =0.3$, with computed initial parts of the unstable and stable manifolds of $p^+$, labeled $W^u(p^+)$ (red curve) and $W^s(p^+)$ (blue curve), respectively.}
\end{figure}

\begin{figure}[t!]
  \centering
  \includegraphics[width=5cm]{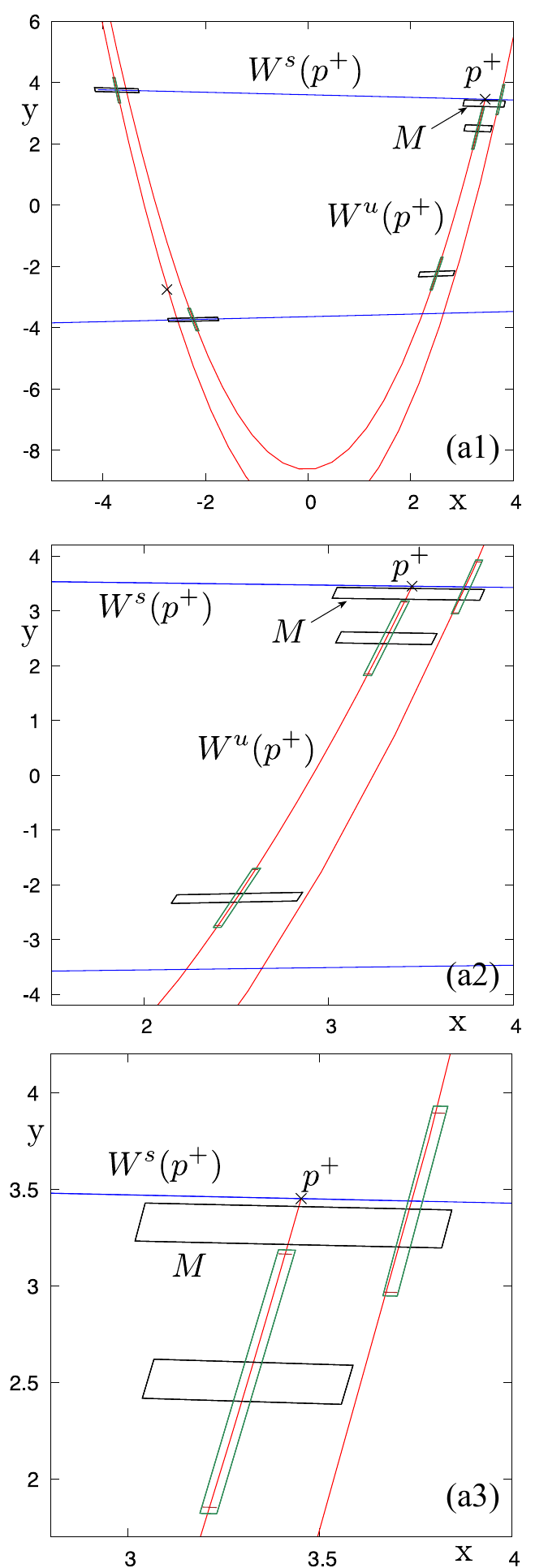}
  \includegraphics[width=5cm]{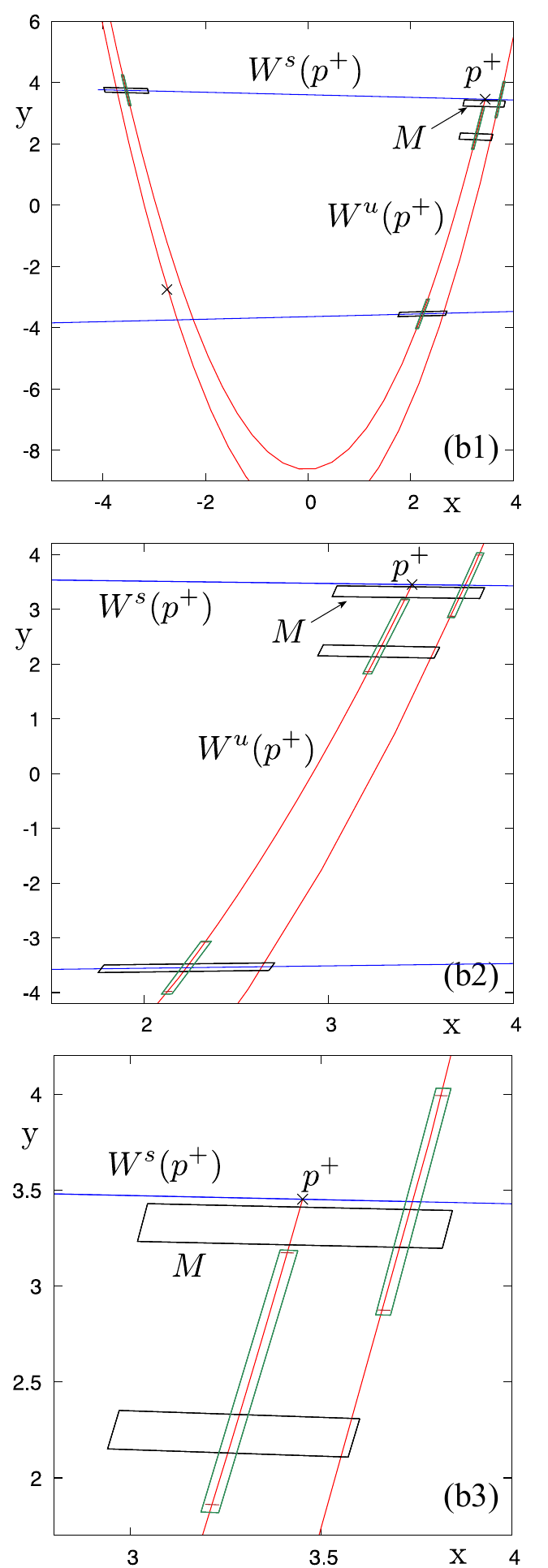}
  \caption{\label{fig:windows-xy}
    Top view of the two covering sequences used in the proof, shown by three successive enlargements of the $(\mathrm{x},\mathrm{y})$-plane in panels~(a1)--(a3) and~(b1)--(b3), respectively. The first sequence consists of five and the second of four h-sets (black `horizontal rectangular' sets) parallel to $W^s(p^+)$. The images of these h-sets (dark green `vertical rectangular' sets) are elongated along $W^u(p^+)$; notice the images of the exit sets (brown horizontal lines) in panels~(a3) and~(b3).}
\end{figure}

\subsection{Construction of a family of h-sets}
\label{sec:boxes}
Before we can apply Theorem~\ref{th:main}, we must construct a family of the h-sets $\{ N_i \}_{i \in I}$, with a selected family of mother sets $\{ M_l\}_{l \in L}$ such that conditions~\emph{(B1)} and~\emph{(B2)} are satisfied. To this end, it suffices to consider only two covering sequences, namely, h-sets that are aligned with the manifolds of $p^+$ and positioned along two different homoclinic orbits to $p^{+}$. Figure~\ref{fig:windows-xy} gives an idea of their specific locations in projection onto the $(\mathrm{x},  \mathrm{y})$-plane; compare also with Figure~\ref{fig:coverblender}. The first column of Figure~\ref{fig:windows-xy} shows four h-sets (black boxes) chosen such that they cover one of these homoclinic orbits; panel~(a1) shows five h-sets and the successive enlargements in panels~(a2) and~(a3) provide more detail near $p^+$. The second column of Figure~\ref{fig:windows-xy} shows the selection of four h-sets that cover another homoclinic orbit; again, panel~(b1) shows the complete covering sequence and panels~(b2) and (b3) are successive enlargements near $p^+$. The dark-green sets in Figure~\ref{fig:windows-xy} are the images of the nine h-sets and the horizontal brown lines in the bottom four panels indicate the images of the exit sets. Note that each rectangle is, in fact, a three-dimensional box with an appropriate range of the $\mathrm{z}$-coordinate. In particular, the images of the h-sets are also straight boxes with respect to the $\mathrm{z}$-coordinate, but the actual $\mathrm{z}$-range for these images, as well as that of their exit sets, depends on the $\mathrm{z}$-range of the corresponding h-set.

To be more precise, for both coverings, we use the same mother set, which is the h-set labeled $M$ that lies closest to $p^+$ in Figure~\ref{fig:windows-xy}; this box is positioned slightly away from $p^+$ along $W^u(p^+)$; it is best seen in the two bottom enlargements in panels~(a3) and~(b3) of Figure~\ref{fig:windows-xy}. The sides of the mother set, as shown in the $(\mathrm{x},  \mathrm{y})$-plane, are roughly parallel to the directions of strong expansion (or the direction of $W^u(p^+)$ in Figure~\ref{fig:windows-xy}), and contraction (the direction of $W^s(p^+)$). These h-sets are boxes, that is, they also have sides parallel to the $\mathrm{z}$-direction, but this is not shown in Figure~\ref{fig:windows-xy}. Following Definition~\ref{def:hset}, we wish to represent $M$ in local coordinates given by the splitting $(x,\, y) := (x,\, y_u,\, y_s)$ with $d_x = 1$ and $d_y = 2$, as the product $\overline{B}_{d_x} \times \overline{B}_{d_y}$ of two balls. As mentioned in Remark~\ref{rem:norms}, a natural choice is to use the maximum norm on both $\overline{B}_{d_x}$ and $\overline{B}_{d_y}$, so that $M$ is represented by a Cartesian product of intervals. We define the local coordinate transformation $\gamma_M : M \to \overline{B}_{d_x} \times \overline{B}_{d_y}$, such that
\begin{equation}
\label{eq:M-set}
  \gamma_M(M) := [-0.1, 0.1] \times \Big( [-2, 2] \times [-0.4, 0.4] \Big).
\end{equation}
Here, $\gamma_M$ is defined explicitly as the affine map
\begin{equation}
  \gamma _M(z) := A_M^{-1} \, [z - p_M(\xi)], \text{ for } z \in M, \label{eq:coordinate-change-M}
\end{equation}
where the columns of the matrix $A_M$ are formed by the (normalized) eigenvectors for $\xi = 1.1$, that is,
\begin{displaymath}
  A_M:= \left[ \begin{array}{r@{.}lcr@{.}l}
                   0 & 131936 & 0 & -0 & 998261 \\
                   0 & 984126 & 0 &   0 & 0447916 \\
                   0 & 118698 & 1 & -0 & 0383312
                 \end{array} \right],
\end{displaymath}
and the origin is shifted to the center of the box, which we choose as the point
\begin{displaymath}
  p_M = p_M(\xi) := \left( 3.4319,\, 3.4319,\, z_{M}(\xi) \right),
\end{displaymath}
close to $p^+$; here, the $\mathrm{x}$- and $\mathrm{y}$-components are explicit, but more work is needed to decide on the $\mathrm{z}$-component $z_{M}(\xi)$, which is explained further below. We remark that $A_M$ can be kept fixed over the entire parameter range $\xi \in [1.01, 1.125]$.

The other h-sets in the sequences of covering relations are chosen similarly in an iterative manner, with the $\mathrm{x}$- and $\mathrm{y}$-coordinates of their center points approximately positioned at the two homoclinic orbits; compare with Figure~\ref{fig:windows-xy}. Leaving the specific choice of $Z$-coordinate again for later, we select the following two sequences:
\begin{align*}
  p_{10}(\xi) & := p_M(\xi), \\
  p_{11}(\xi) & :=\left( \phantom{-}3.3127,\, \phantom{-}2.5032,\, z_{11}(\xi)\right), \\
  p_{12}(\xi) & :=\left( \phantom{-}2.5032,\, -2.2401,\, z_{12}(\xi)\right), \\
  p_{13}(\xi) & :=\left( -2.2401,\, -3.7312,\, z_{13}(\xi)\right), \\
  p_{14}(\xi) & :=\left( -3.7312,\, \phantom{-}3.7495,\, z_{14}(\xi)\right).
\end{align*}
and
\begin{align*}
  p_{20}(\xi) & := p_M(\xi), \\
  p_{21}(\xi) & :=\left( \phantom{-}3.2714,\, \phantom{-}2.2300,\, z_{21}(\xi)\right), \\
  p_{22}(\xi) & :=\left( \phantom{-}2.2300,\, -3.5459,\, z_{22}(\xi)\right), \\
  p_{23}(\xi) & :=\left( -3.5459,\, \phantom{-}3.7421,\, z_{23}(\xi)\right);
\end{align*}
Observe that, in projection onto the $(\mathrm{x}, \mathrm{y})$-plane, $p_{11} \mapsto p_{12} \mapsto p_{13} \mapsto p_{14}$ and $p_{21} \mapsto p_{22} \mapsto p_{23}$ under the action of $f$. Furthermore, to good approximation, we have $p_{10} \mapsto p_{11}$ and $p_{20} \mapsto p_{21}$, while the $(\mathrm{x}, \mathrm{y})$-projection of the images of both $p_{14}$  and $p_{23}$ is approximately that of $p_M$ as well. The linearization about each of the sequences $\{ p_{1b} \}_{b = 0, \ldots, k_1}$, with $k_1 = 4$, and $\{ p_{2b}\}_{b = 0, \ldots, k_2}$, with $k_2 = 3$, gives rise to appropriate affine changes of coordinates of the form
\begin{equation}
  \gamma_{1b}(z) = A_{1b}^{-1} \, [z - p_{1b}(\xi)] \qquad \mbox{and} \qquad
  \gamma_{2b}(z) = A_{2b}^{-1} \, [z - p_{2b}(\xi)], \label{eq:coordinate-changes-i}
\end{equation}
with $A_{10} = A_{20} = A_M$ and
\begin{displaymath}
  \begin{array}{rclcl}
    A_{1(b+1)} &\approx& Df(p_{1b}) \, A_{1b} & \mbox{for} & b = 0,\ldots, k_1,\\
    A_{2(b+1)} &\approx& Df(p_{2b}) \, A_{2b} & \mbox{for} & b = 0,\ldots, k_2,
  \end{array}
\end{displaymath}
that define the h-sets $N_{1b}$, for $b = 0, \ldots, k_1$ and $N_{2b}$, for $b = 0, \ldots, k_2$. Note that $\gamma_{10} = \gamma_{20} = \gamma_{M}$, since $A_{10} = A_{20} = A_M$ and $p_{10}(\xi) = p_{20}(\xi) = p_M(\xi)$. The local coordinates
of the other h-sets are defined recursively based on successive iterates of $f$. Specifically, the matrices for coordinate changes at points $p_{1b}$ along the first homoclinic orbit were chosen as
\begin{displaymath}
  \begin{array}{rclrcl}
    A_{11} &=& \left[ \begin{array}{r@{.}lrr@{.}l}
     \phantom{-}0 & 15187 & 0 & -0 & 64804 \\
                        1 & 0123     & 0 &   0 & 039354 \\
                        0 & 17202 & 1 & -0 & 038011
                      \end{array} \right], &
    A_{12} &=& \left[ \begin{array}{r@{.}lrr@{.}l}
                        0 & 15622 & 0 &   0 & 85005 \\
                        0 & 78914 & 0 &   0 & 056412 \\
                        0 & 18542   & 1 & -0 & 053097
                    \end{array} \right], \\[8mm]
    A_{13} &=& \left[ \begin{array}{r@{.}lrr@{.}l}
                          0 & 12178   & 0 &   1 & 2185 \\
                        -0 & 53836 & 0 &   0 & 04927 \\
                          0 & 15326 & 1 & -0 & 043093
                   \end{array} \right], &
    A_{14} &=& \left[ \begin{array}{r@{.}lrr@{.}l}
                        -0 & 08308     & 0 &   1 & 0643 \\
                          0 & 62561   & 0 & -0 & 046216 \\
                        -0 & 057064 & 1 &   0 & 040401
                   \end{array} \right];
  \end{array}
\end{displaymath}
and those for coordinate changes at points $p_{2b}$ along the second homoclinic orbit were chosen as
\begin{displaymath}
  \begin{array}{rclrcl}
    A_{21} &=& \left[ \begin{array}{r@{.}lrr@{.}l}
                        -0 & 15209 & 0 & -0 & 78781 \\
                        -1 & 0012   & 0 &   0 & 053503 \\
                        -0 & 17003 & 1 & -0 & 050450
                      \end{array} \right], &
    A_{22} &=& \left[ \begin{array}{r@{.}lrr@{.}l}
                        -0 & 15451 & 0 &   1 & 1557 \\
                        -0 & 69615 & 0 &   0 & 049190 \\
                        -0 & 18337 & 1 & -0 & 043018
                      \end{array} \right], \\[8mm]
    A_{23} &=& \left[ \begin{array}{r@{.}lrr@{.}l}
                        -0 & 10743   & 0 &   1 & 0625 \\
                          0 & 75471 & 0 & -0 & 046214 \\
                        -0 & 13856 & 1 &   0 & 040391
                      \end{array} \right].
  \end{array}
\end{displaymath}
The lengths of the column vectors for the matrices $A_{1b}$ and $A_{2b}$ are chosen so that the expansion and contraction in each local map is (roughly) the same for each covering relation involved in a given homoclinic excursion.
\begin{figure}[t!]
  \centering
  \includegraphics{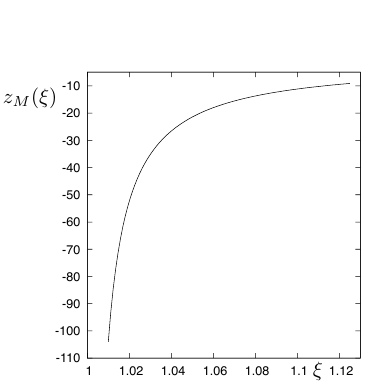}
  \caption{\label{fig:zM}
    Graph of $z_M(\xi)$ for $\xi \in [1.01,1.125]$ that satisfy Equations~\eqref{eq:xi-change-1} and~\eqref{eq:xi-change-2}.}
\end{figure}
%
Let us now return to the questions how to choose the $\mathrm{z}$-ranges for the h-sets $M$, $N_{1b}$, for $b = 0, \ldots, k_1$ and $N_{2b}$, for $b = 0, \ldots, k_2$. Recall that the $\mathrm{z}$-coordinate represents the weakly expanding coordinate $y_u$. Our goal is to achieve a covering relation as sketched in Figure~\ref{fig:blender_modified}. To this end, we define the $\mathrm{z}$-coordinates of the center points in each h-set $N_{1b}$ and $N_{2b}$ recursively as follows. We formally define
\begin{eqnarray*}
  z_{1(k+1)}(\xi) &=& \pi_{\mathrm{z}} f(p_{1k}(\xi)) = \xi \, z_{1k}(\xi) + \pi_{\mathrm{y}} p_{1k}(\xi),\\
  z_{2(k+1)}(\xi) &=& \pi_{\mathrm{z}} f(p_{2k}(\xi)) = \xi \, z_{2k}(\xi) + \pi_{\mathrm{y}} p_{2k}(\xi),
\end{eqnarray*}
where
\begin{displaymath}
  z_{10}(\xi) = z_{20}(\xi) := z_{M}(\xi),
\end{displaymath}
and $\pi_{\mathrm{y}}$ and $\pi_{\mathrm{z}}$ are the projections onto the $\mathrm{y}$- and $\mathrm{z}$-coordinates, respectively. In our implementation, we choose different values of $z_{M}(\xi)$ for different $\xi$ to ensure that, after an excursion along the first homoclinic orbit given by the points $p_{1b}$, the change in the $\mathrm{z}$-coordinate is negative, meaning that
\begin{equation}
\label{eq:xi-change-1}
  l_1(z_M(\xi),\, \xi) := \pi_{\mathrm{z}} f(p_{14}(\xi)) = \xi \, z_{14}(\xi) + \pi_{\mathrm{y}} p_{14}(\xi) < z_M(\xi).
\end{equation}
Simultaneously, we require that, after an excursion along the second homoclinic orbit given by $p_{2b}$, the change in the $\mathrm{z}$-coordinate is positive, meaning that
\begin{equation}
\label{eq:xi-change-2}
  l_2(z_M(\xi),\, \xi) := \pi_{\mathrm{z}} f(p_{23}(\xi)) = \xi \, z_{23}(\xi) + \pi_{\mathrm{y}} p_{23}(\xi) > z_M(\xi).
\end{equation}
By combining Equations~\eqref{eq:xi-change-1} and~\eqref{eq:xi-change-2}, we find $z_{M}(\xi) = z$ numerically as the $\xi$-parameterised solutions to the scalar equation $\frac{1}{2} ( l_1(z,\, \xi)+l_2(z,\, \xi )) - z = 0$. Figure~\ref{fig:zM} shows the result of this approach and represents the choice of $z_{M}(\xi)$ we used for the computer-assisted proof.

\begin{figure}[t!]
  \centering
  \includegraphics{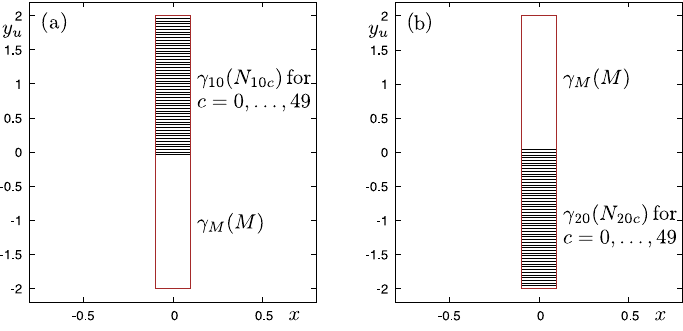}
  \caption{\label{fig:initial-hSets}
    Projection onto the local $(x, y_u)$-coordinate plane of the mother set $M$, together with, in panel~(a), the initial h-sets $N_{10c}$ along the first homoclinic orbit from Figure~\ref{fig:windows-xy}(a) and, in panel~(b), the initial h-sets $N_{20c}$ along the second homoclinic orbit from Figure~\ref{fig:windows-xy}(b), both for $c = 0, \ldots, 49$.}
\end{figure}
%
As mentioned just before the start of Section~\ref{sec:CAP}, to check condition~\emph{(B2)} in practice, it is convenient to subdivide the resulting two families of h-sets $\{ N_{1b} \}_{b = 0, \ldots, k_1}$ and $\{ N_{2b} \}_{b = 0, \ldots, k_2}$ into smaller overlapping h-sets; in local coordinates, this overlap only needs to be present in the $y_u$-direction. We use a total of $100$ subdivisions, $50$ each for $N_{1b}$ and $N_{2b}$, identified by the index $c \in \left\{ 0, 1, \ldots, 49 \right\}$. The entire family $\{ N_i \}_{i \in I}$ of h-sets is then defined as
\begin{displaymath}
  \gamma_{ab}^{-1}(N_{abc}) := [-0.1, 0.1] \times \Big( \mathcal{I}_{abc} \times [-0.4, 0.4] \Big),
\end{displaymath}
where the set of indices for the h-sets $N_i = N_{abc}$ is
\begin{displaymath}
  I := \left\{ abc \bigm| ab \in \left\{ 10, 11, 12, 13, 14, 20, 21, 22, 23\right\}, \mbox{and } c \in
\left\{ 0, 1, \ldots, 49 \right \} \right\} \cup \{ 0 \};
\end{displaymath}
here, the index $0$ is added so that we can choose $N_0 := M$. The intervals $\mathcal{I}_{abc}$ for the local coordinate $y_u$ are chosen as follows. The intervals $\mathcal{I}_{10c}$ and $\mathcal{I}_{20c}$ for $c \in \left\{ 0, 1, \ldots, 49 \right\}$ are defined as
\begin{displaymath}
  \mathcal{I}_{10c} := c \, \Delta + [-\Delta,\, \Delta] \qquad \mbox{and} \qquad
  \mathcal{I}_{20c} := -c \, \Delta + [-\Delta ,\, \Delta],
\end{displaymath}
with $\Delta = \frac{2}{50} = 0.04$. Note that
\begin{displaymath}
  \mathcal{I}_{100} = \mathcal{I}_{200} = [-\Delta,\, \Delta].
\end{displaymath}
Moreover, for $c = 49$, the right edge of $\mathcal{I}_{10c}$ is $2$ and the left edge of $I_{20c}$ is $-2$. Hence, the union of the $50$ intervals $I_{10c}$ form the interval $[-\Delta,\, 2]$ and the union of the $50$ intervals $I_{20c}$ forms the interval $[-2,\, \Delta]$. In total, we have $100$ overlapping intervals that range over $[-2, 2]$. Therefore, since $\gamma_M = \gamma_{10c} = \gamma_{20c}$, by construction, we have
\begin{equation}
\label{eq:M-parts}
  \gamma_M(M) =  \bigcup_{c=0}^{49} \gamma_{10}\left( N_{10c} \right) \cup \bigcup_{c=0}^{49} \gamma_{20}\left( N_{20c} \right).
\end{equation}
The h-sets $N_{10c}$ and $N_{20c}$, for $c \in \{ 0, \ldots, 49\}$ are shown in Figure~\ref{fig:initial-hSets} in projection onto the local $(x, y_u)$-coordinate plane. They are the initial h-sets for the sequences of coverings~\eqref{eq:connecting-sequence}. We intentionally arrange these h-sets to overlap so that they enclose every horizontal disk in $M$, as is required for condition~\emph{(B1)}; this is discussed in more detail at the end of Section~\ref{sec:validation-cones}.

The remainder of the intervals $\mathcal{I}_{abc}$, for $abc \in I$ with $b > 0$, are chosen recursively by computing the images under $\gamma_{ab} \circ f \circ \gamma_{a(b-1)}^{-1}$ of the boxes $[-0.1, 0.1] \times \Big( \mathcal{I}_{a(b-1)c} \times [-0.4, 0.4] \Big)$, and choosing local coordinates for $\mathcal{I}_{abc}$ such that it contains the $y_u$-range of the resulting image. This is done automatically by our implementation and ensures that we have topological entry along $y_u$.

\subsection{Validation of conditions (B1) and (B2)}
\label{sec:validation}
We claim that the family of h-sets $\{ N_{abc} \}_{abc \in I}$ \correction{and $M$}, with $ab \in \{  10, 11, 12, 13, 14, 20,$ $ 21, 22, 23 \}$ and $c \in \{ 0, \ldots, 49\}$ satisfies conditions~\emph{(B1)} and~\emph{(B2)} of Theorem~\ref{th:main}. In this section, we explain how we prove this in a computer-assisted manner. The additional requirement of hyperbolicity and transitivity of the invariant set in a suitable set $U\subset \mathbb{R}^3$ is addressed in the Appendix. There are various methods for the validation of covering relations and cone conditions; we direct the reader to~\cite{CG-CPAM,  MR2060532, MR2736320, MR2276430, MR2494688, MR2060531}, which deal with this topic in various contexts. When the topological exit set for the coverings and cone conditions is of dimension $d_x = 1$, which is the specific case for the H{\'e}non family, we can use the approach outlined in the following sections.

\subsubsection{Validating covering relations when $d_x = 1$}
\label{sec:validation-cover}
If $d_x = 1$ then $\overline{B}_{d_x} = [-1, 1]$ and the exit set $N^{-}_i$ consists of two parts for any $i \in I$; in the coordinates given by the homeomorphism $\gamma_i$ they are
\begin{displaymath}
  N_{\gamma_i,l}^{-} = \left\{ -1 \right\} \times \overline{B}_{d_y} \quad \mbox{and} \quad
  N_{\gamma_i,r}^{-} = \left\{ 1 \right\} \times \overline{B}_{d_y},
\end{displaymath}
where the subscript `$l$' stands for `left', and `$r$' for `right'. This notation allows us to formulate the following useful result.


\begin{lemma}
\label{lem:cov-1d-unstble}
Let $N_i, N_j \subset \mathbb{R}^n$, with $n \geq 3$, be two h-sets with associated transformations $\gamma_i$ and $\gamma_j$ that each map to a product of closed balls in local coordinates $(x, y)$; here, the component $x$ is one dimensional with corresponding projection operator $\pi_x$. Furthermore, let $f : \mathbb{R}^n \to \mathbb{R}^n$ be a diffeomorphism. Recall that the action of $f$ in local coordinates is denoted by $f_{ji} = \gamma_j \circ f \circ \gamma_i^{-1}$. Assume that either
\begin{equation}
\label{eq:u=1-covering-w=1}
  \pi _{x} \left[ f_{ji}(N_{\gamma_i,l}^{-}) \right] < -1 \quad \text{and} \quad \pi_{x} \left[ f_{ji}(N_{\gamma_i,r}^{-})  \right] > 1,
\end{equation}
or
\begin{equation}
\label{eq:u=1-covering-w=-1}
  \pi_{x} \left[ f_{ji}(N_{\gamma_i,l}^{-})  \right] > 1 \quad \text{and} \quad \pi_{x} \left[ f_{ji}(N_{\gamma_i,r}^{-})  \right] < -1.
\end{equation}
Then $N_i \cover{f} N_j$, provided
\begin{equation}
\label{eq:u=1-covering-contraction}
  f_{ji}(N_{\gamma_i}) \cap \overline{B}_{d_x} \times \left( \mathbb{R}^{d_y} \setminus B_{d_y}\right) = \emptyset.
\end{equation}
\end{lemma}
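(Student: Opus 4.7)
The plan is to verify the two parts of Definition~\ref{def:covering} by constructing an explicit linear map $A$ together with an explicit two-stage homotopy $\varsigma$. For part~(II) I would simply choose $Ax = 2x$ in case~\eqref{eq:u=1-covering-w=1} and $Ax = -2x$ in case~\eqref{eq:u=1-covering-w=-1}. In both cases $A(\partial\overline{B}_{d_x}) = A(\{-1,1\}) = \{-2,2\}$ lies outside $\overline{B}_{d_x} = [-1,1]$, so part~(II) is immediate. A single global linear interpolation from $f_{ji}$ to $(Ax,0)$ would \emph{not} work in general, so I would split the deformation.

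First, I would contract the $y$-component to zero while leaving the $x$-component untouched, via
\[
\varsigma_1(t,(x,y)) = \bigl(\pi_x f_{ji}(x,y),\, (1-t)\pi_y f_{ji}(x,y)\bigr), \qquad t\in[0,1],
\]
and then, starting from $\varsigma_1(1,\cdot)$, linearly straighten the $x$-component via
\[
\varsigma_2(t,(x,y)) = \bigl((1-t)\pi_x f_{ji}(x,y) + t\,Ax,\, 0\bigr), \qquad t\in[0,1].
\]
The full homotopy $\varsigma$ is obtained by the standard reparametrisation that uses $\varsigma_1(2s,\cdot)$ on $s\in[0,1/2]$ and $\varsigma_2(2s-1,\cdot)$ on $s\in[1/2,1]$, which agrees at $s=1/2$, satisfies $\varsigma(0,\cdot)=f_{ji}$, and satisfies $\varsigma(1,(x,y)) = (Ax,0)$.

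It then remains to check the two non-intersection requirements in part~(I) throughout each stage. Avoidance of $N_{\gamma_j}$ by the image of the exit set reduces to a one-dimensional sign/magnitude estimate on $\pi_x\varsigma$: in Stage~1 we have $\pi_x\varsigma_1 = \pi_x f_{ji}$, which on $N_{\gamma_i,l}^{-}\cup N_{\gamma_i,r}^{-}$ lies outside $[-1,1]$ by~\eqref{eq:u=1-covering-w=1} (or~\eqref{eq:u=1-covering-w=-1}); in Stage~2 the quantities $\pi_x f_{ji}(\pm 1,y)$ and $A(\pm 1)$ have matching signs and absolute value at least $1$ by our choice of $A$, so their convex combination stays outside $[-1,1]$ (here compactness gives a uniform strict bound $|\pi_x f_{ji}(\pm 1,y)|\ge c>1$). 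The avoidance of the entry set $\gamma_j(N_j^{+}) = \overline{B}_{d_x}\times\partial\overline{B}_{d_y}$ is trivial in Stage~2 because the $y$-component is identically zero, and in Stage~1 it is exactly where hypothesis~\eqref{eq:u=1-covering-contraction} enters: if $(\xi,\eta)=\varsigma_1(t,(x,y))$ were in $\overline{B}_{d_x}\times\partial\overline{B}_{d_y}$, then $\|\eta\|_{d_y}=1$ forces $\|\pi_y f_{ji}(x,y)\|_{d_y}\ge 1$, so $\pi_y f_{ji}(x,y)\notin B_{d_y}$, and \eqref{eq:u=1-covering-contraction} gives $\pi_x f_{ji}(x,y)\notin\overline{B}_{d_x}$, contradicting $\xi=\pi_x f_{ji}(x,y)\in\overline{B}_{d_x}$.

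The main subtlety, and the reason for splitting the homotopy in two stages in this specific order, is precisely this last step: one has to preserve the $\pi_x$-component of $f_{ji}$ during the $y$-contraction so that hypothesis~\eqref{eq:u=1-covering-contraction} can be applied directly to the $x$-coordinate of $\varsigma_1$; a single joint interpolation could push $\pi_x$ back into $\overline{B}_{d_x}$ via the $Ax$ term, breaking the argument. Modulo this observation, everything else is a routine check using the continuity of $f_{ji}$ and the compactness of $N_{\gamma_i}$, and the two cases~\eqref{eq:u=1-covering-w=1} and~\eqref{eq:u=1-covering-w=-1} are handled in exactly the same way up to the sign of $A$.
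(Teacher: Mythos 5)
Your proposal is correct and is essentially identical to the paper's proof: the paper uses exactly the same two-stage homotopy (first collapsing the $y$-component while freezing $\pi_x f_{ji}$, then linearly straightening the $x$-component to $Ax$ with $A=[\pm 2]$), glued at $t=\tfrac12$. Your explicit verification of the non-intersection conditions in each stage — in particular, that hypothesis~\eqref{eq:u=1-covering-contraction} is what rules out hitting $\gamma_j(N_j^{+})$ during the $y$-contraction — fills in details the paper leaves to the reader.
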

\begin{proof}
We define a homotopy $\varsigma : [0, 1] \times N_{\gamma_i} \to \mathbb{R}^{d_x} \times \mathbb{R}^{d_y}$ as
\begin{displaymath}
  \varsigma \left( t,\, (x, y) \right) = \left\{
    \begin{array}{lrl}
      \left( \pi _{x} f_{ji}(x, y),\, (1 - 2 t) \, \pi_{y} f_{ji}(x, y) \right), & \text{if} & t \in \left[ 0,\tfrac{1}{2} \right], \\[2mm]
      \left( (2 t - 1) \, A x + 2 (1 - t) \, \pi_{x} f_{ji}(x, y),\, 0\right), & \text{if} & t \in \left(\frac{1}{2},1\right],
    \end{array} \right.
\end{displaymath}
where $A$ is the trivial $1 \times 1$ matrix $A = [2]$ for the case of assumption~\eqref{eq:u=1-covering-w=1}, and $A = [-2]$ for assumption~\eqref{eq:u=1-covering-w=-1}. This homotopy fulfils the conditions from Definition~\ref{def:covering}.
\end{proof}

We use Lemma \ref{lem:cov-1d-unstble} to check whether the $c$-dependent families of sequences $\{ N_{abc} \}_{abc \in I}$ for $a = 0$ and $a = 1$ are, in fact, covering sequences. More precisely, for each $c \in \{ 0, \ldots, 49\}$, we validate that either assumptions~\eqref{eq:u=1-covering-w=1} and~\eqref{eq:u=1-covering-contraction} or assumptions~\eqref{eq:u=1-covering-w=-1} and~\eqref{eq:u=1-covering-contraction} hold for the h-sets
\begin{eqnarray}
\label{eq:coverings-1}
  N_{1(b-1)c} &\cover{f}& N_{1bc} \qquad \text{for } b = 1, 2, 3, 4, \\
\label{eq:coverings-2}
  N_{2(b-1)c} &\cover{f}& N_{2bc} \qquad \text{for } b = 1, 2, 3,
\end{eqnarray}
We also validate that the final pairs in these two sequences are covering relations, namely, that we have
\begin{eqnarray}
\label{eq:coverings-M-1}
  N_{14c} &\cover{f}& M \qquad \text{for } c = 0, \ldots, 49, \\
\label{eq:coverings-M-2}
  N_{23c} &\cover{f}& M \qquad \text{for } c = 0, \ldots, 49.
\end{eqnarray}
%
\begin{figure}[t!]
  \centering
  \includegraphics{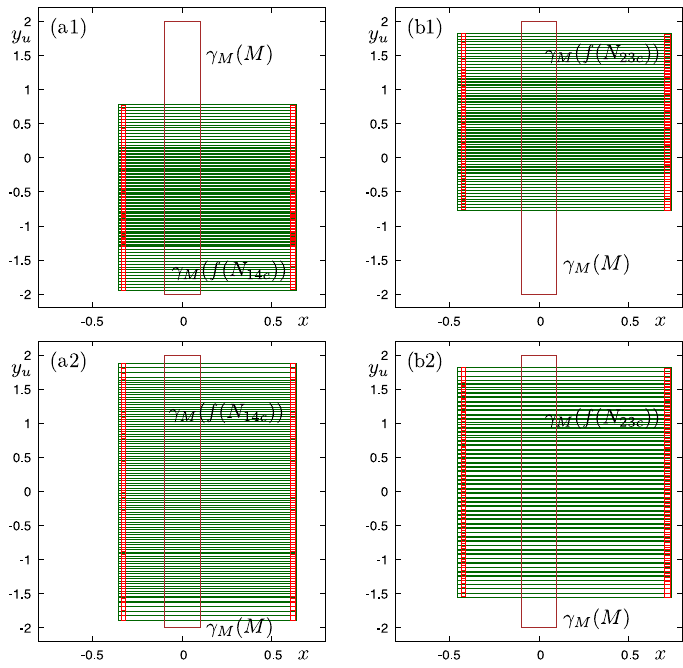}
  \caption{\label{fig:final-covering}
    The covering in local coordinates of the set $M$ (dark red vertical rectangle) by final sets, shown in projection onto the $(x, y_u)$-plane. Panels~(a1) and~(b1) show the coverings $N_{14c} \cover{f} M$ and $N_{23c} \cover{f} M$, for $\xi \in [1.01, 1.011]$, respectively, and panels~(a2) and~(b2) show these coverings for $\xi \in [1.124, 1.125]$; here $c = 0, \ldots, 49$ and the bounds on the images of the exit sets are shown in red.}
\end{figure}

We refer again to Figure~\ref{fig:windows-xy}, which also shows the images of the sets involved in the two covering sequences; those associated with the covering relations in~\eqref{eq:coverings-1} and~\eqref{eq:coverings-M-1} are depicted in the left column, and those associated with~\eqref{eq:coverings-2} and~\eqref{eq:coverings-M-2} in the right column. Figure~\ref{fig:final-covering} shows for each of the two homoclinic excursions, the last covering relation in the sequence in projection onto the $(x,y_u)$-plane of local coordinates; see also the green vertical boxes $f\left( N_{14c}\right)$ and $f\left( N_{23c}\right)$ in the top-right corners of panels~(a3) and~(b3) of Figure~\ref{fig:windows-xy}, respectively. In Figure \ref{fig:final-covering}, we also show the projections of the coverings~\eqref{eq:coverings-M-1}--\eqref{eq:coverings-M-2} onto the local coordinates $(x,y_u)$ induced by $\gamma_M$. The covering relations~\eqref{eq:coverings-1}--\eqref{eq:coverings-M-2} form the covering sequences that must satisfy cone conditions, as required for condition~\emph{(B2)}.

\subsubsection{Validating cone conditions when $d_x = 1$}
\label{sec:validation-cones}
To prove cone conditions for $d_x = 1$ we make use of the notion of an \emph{interval enclosure of a derivative}. This means that each element in the Jacobian matrix is an interval given by the range of possible values achieved by the corresponding partial derivatives. For $g : \mathbb{R}^n \to \mathbb{R}^n$ and a set $U \subset \mathbb{R}^n$ we write the interval enclosure of the $n \times n$ Jacobian matrix $Dg$ in $U$ as
\begin{displaymath}
  \left[ Dg(U) \right] := \left[ \left( Dg(U) \right)_{k,m} \right]_{k,m \in \{1, \ldots, n\}} \subset \mathbb{R}^{n\times n},
\end{displaymath}
where,
\begin{equation*}
  \left( Dg(U) \right)_{k,m} = \left[ \inf_{z \in U} \frac{\partial \pi_{x_k} g}{\partial x_m}(z),\,
    \sup_{z \in U} \frac{\partial \pi_{x_k} g}{\partial x_m}(z) \right].
\end{equation*}
For a vector $v \in \mathbb{R}^n$, the matrix product $\left[ Dg(U) \right] \, v$ is given by
\begin{displaymath}
  \left[ Dg(U) \right] \, v := \left\{ D \, v \bigm| D \in \left[ Dg(U) \right] \right\} \subset \mathbb{R}^n.
\end{displaymath}

We also introduce the following notation for a particular choice of cone. Recall from Equation~\eqref{eq:cone-def-2} that a cone in local coordinates at $z_{\gamma_i} = \gamma_i(z) \in \mathbb{R}^{d_x} \times \mathbb{R}^{d_y}$ is defined as
\begin{displaymath}
  {\cal C}_{\gamma_i}(z_{\gamma_i}) := \left\{  (q_x,\, q_y) \in \mathbb{R}^{d_x} \times \mathbb{R}^{d_y} \Bigm| \left\| q_x - \pi_x(z_{\gamma_i}) \right\|_{d_x}  > \left\| q_y - \pi_y(z_{\gamma_i}) \right\|_{d_y} \right\}.
\end{displaymath}
In our special case with $n = 3$ and local coordinates $(x, y_u, u_s)$, we choose the $L_\infty$-norm for both projections, that is,
\begin{displaymath}
  \left\| \pi_x(x, y_u, u_s) \right\|_{d_x} = \left\| x \right\|_1 = \left\vert x \right\vert,
\end{displaymath}
and
\begin{displaymath}
  \left\| \pi_y(x, y_u, u_s) \right\|_{d_y} =  \left\| y\right\|_2 = \left\| (y_u,\, y_s) \right\|
  := \max \left\{ \frac{\left\vert y_u\right\vert}{\kappa_u},\, \frac{\left\vert y_s\right\vert}{\kappa_s} \right\},
\end{displaymath}
for some positive constants $\kappa_u, \kappa_s > 0$. Using these norms, we define a cone at $z \in N_i$ as
\begin{displaymath}
  {\cal C}_i(z) = \gamma_i \left( {\cal C}_{\gamma_i}(z_{\gamma_i};\, \kappa_u, \kappa_s) \right),
\end{displaymath}
where
\begin{eqnarray*}
  {\cal C}_{\gamma_i}(z_{\gamma_i};\, \kappa_u, \kappa_s) = \big\{ q \in \mathbb{R}^n
  &\bigm|&  q - \gamma_i(z) = x \, (1, y_u, y_s) \\
  & & \text{with} \ \left\vert y_u\right\vert <\kappa_u, \left\vert y_s\right\vert <\kappa_s \text{ and } x\in \mathbb{R}  \big\}.
\end{eqnarray*}
With the above definition of a cone, we validate that the family of h-sets $\{ N_{abc} \}_{abc \in I}$, with $ab \in \left\{ 10, 11, 12, 13, 14, 20, 21, 22, 23 \right\}$ and $c \in \{ 0, \ldots, 49\}$ satisfies cone conditions; this can be done by using only local coordinates with the following lemma.


\begin{lemma}
\label{lem:cone-propagation}
Let $i, j \in I$, and let the h-sets $N_i$ and $N_j$ be equipped with the cones
\begin{eqnarray*}
  {\cal C}_i(z) &=& \gamma_i \left( {\cal C}_{\gamma_i}(z_{\gamma_i};\, \kappa_u^i, \kappa_s^i) \right) \text{ and} \\
  {\cal C}_j(z) &=& \gamma_j \left( {\cal C}_{\gamma_j}(z_{\gamma_j};\, \kappa_u^j, \kappa_s^j) \right),
\end{eqnarray*}
for fixed constants $\kappa_u^i, \kappa_s^i, \kappa_u^j, \kappa_s^j > 0$, respectively. Suppose that for every $v \in \{ 1 \} \times [-\kappa_u^i,\, \kappa_u^i] \times [-\kappa_s^i,\, \kappa_s^i]$ and for every vector
\begin{displaymath}
  w = (w_x, w_{y_u}, w_{y_s}) \in \left[ Df_{ji}(N_{\gamma_i}) \right] \, v
\end{displaymath}
we have $w_x \neq 0$ and
\begin{equation}
\label{eq:cone-prop}
  \left( 1,\, \frac{w_{y_u}}{w_x},\, \frac{w_{y_s}}{w_x} \right) \in \{ 1 \} \times [-\kappa_u^j,\, \kappa_u^j] \times
  [-\kappa_s^j,\, \kappa_s^j]
\end{equation}
Then $f$ satisfies cone conditions from $N_i$ to $N_j$.
\end{lemma}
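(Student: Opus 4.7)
The plan is to verify the inclusion $f(\mathcal{C}_i(z)) \subset \mathcal{C}_j(f(z))$ from Definition~\ref{def:cone-cond} by passing to the local coordinates of $N_i$ and $N_j$, applying the mean-value theorem (in its integrated, interval-arithmetic form) along straight segments, and then using the hypothesis on $[Df_{ji}(N_{\gamma_i})]$ to convert derivative information into cone inclusion.

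First, I would fix $z \in N_i$ with $f(z) \in N_j$ and an arbitrary point $q \in \mathcal{C}_i(z)$. Writing $z_{\gamma_i} = \gamma_i(z)$ and $q_{\gamma_i} = \gamma_i(q)$, by the definition of $\mathcal{C}_{\gamma_i}(z_{\gamma_i};\,\kappa_u^i,\kappa_s^i)$ there exist $x \in \mathbb{R}$ and scalars $y_u, y_s$ with $|y_u| < \kappa_u^i$, $|y_s| < \kappa_s^i$ such that
\[
  q_{\gamma_i} - z_{\gamma_i} = x\,(1,\, y_u,\, y_s).
\]
The case $x = 0$ is trivial, so I assume $x \neq 0$. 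Since $N_{\gamma_i}$ is a product of closed balls in the maximum norm, it is convex, so the line segment joining $z_{\gamma_i}$ and $q_{\gamma_i}$ lies inside $N_{\gamma_i}$ whenever both endpoints do, and the integrated mean-value theorem produces a matrix
\[
  M := \int_0^1 Df_{ji}\bigl(z_{\gamma_i} + t\,(q_{\gamma_i}-z_{\gamma_i})\bigr)\,dt \; \in \; [Df_{ji}(N_{\gamma_i})],
\]
satisfying $f_{ji}(q_{\gamma_i}) - f_{ji}(z_{\gamma_i}) = M\,(q_{\gamma_i} - z_{\gamma_i}) = x\,Mv$, where $v := (1,\,y_u,\,y_s)$.

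Next, I would apply the hypothesis of the lemma with this $v$, which lies in $\{1\}\times[-\kappa_u^i,\kappa_u^i]\times[-\kappa_s^i,\kappa_s^i]$. This produces a vector $w = (w_x, w_{y_u}, w_{y_s}) := Mv \in [Df_{ji}(N_{\gamma_i})]\,v$ with $w_x \neq 0$ and $\bigl(1,\,w_{y_u}/w_x,\,w_{y_s}/w_x\bigr) \in \{1\}\times[-\kappa_u^j,\kappa_u^j]\times[-\kappa_s^j,\kappa_s^j]$. Factoring, I get
\[
  f_{ji}(q_{\gamma_i}) - f_{ji}(z_{\gamma_i}) = x\,w = (x\,w_x)\cdot\left(1,\,\tfrac{w_{y_u}}{w_x},\,\tfrac{w_{y_s}}{w_x}\right),
\]
with $xw_x \neq 0$, which exhibits $f_{ji}(q_{\gamma_i})$ as lying in $\mathcal{C}_{\gamma_j}(f_{ji}(z_{\gamma_i});\,\kappa_u^j,\kappa_s^j)$. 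Composing with $\gamma_j^{-1}$ and using~\eqref{eq:Ci-def} then yields $f(q) \in \mathcal{C}_j(f(z))$, proving the cone condition.

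The main technical point to watch is that the cone $\mathcal{C}_i(z)$ is unbounded and so $q$ may leave $N_i$, in which case the mean-value step above does not apply verbatim because the derivative along the segment is not controlled by $[Df_{ji}(N_{\gamma_i})]$. In practice this is harmless: the cone condition is used only to propagate horizontal disks $[h]$ through covering sequences (via Theorem~\ref{th:disk-propagation}), and in that context the relevant pairs of points always lie in $N_i$ so that the convexity of $N_{\gamma_i}$ makes the MVT legitimate. Alternatively, one can observe that the cone membership relation is homogeneous in $x$, so it suffices to verify the inclusion for points $q$ arbitrarily close to $z$ (where the segment is automatically in $N_{\gamma_i}$) and then extend by scaling the direction vector $(1, y_u, y_s)$ combined with a limit argument on the boundary $|y_u| \to \kappa_u^i$, $|y_s| \to \kappa_s^i$. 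This is the only subtle step; the rest is a direct translation of the hypothesis via the MVT.
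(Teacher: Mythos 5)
Your proof is correct and follows essentially the same route as the paper's: pass to local coordinates, apply the mean-value theorem in its interval-enclosure form to write $f_{ji}(q_{\gamma_i}) - f_{ji}(z_{\gamma_i})$ as a matrix from $[Df_{ji}(N_{\gamma_i})]$ applied to a scalar multiple of a direction vector $v \in \{1\}\times[-\kappa_u^i,\kappa_u^i]\times[-\kappa_s^i,\kappa_s^i]$, and invoke the hypothesis to place the result in the target cone. The domain subtlety you flag (the cone is unbounded, so the segment may leave $N_{\gamma_i}$) is real but is equally present and unaddressed in the paper's own proof, and your first resolution --- that in the application via Theorem~\ref{th:disk-propagation} only pairs of points inside the h-sets are ever used --- is the right one; only your alternative scaling-plus-limit remedy would not survive the nonlinearity of $f_{ji}$, but nothing in the main argument depends on it.
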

\begin{proof}
Let $z \in N_i$ and take $q = (q_x,\, q_y) \in {\cal C}_{\gamma_i}(z_{\gamma_i};\, \kappa _u^i, \kappa _s^i)$. Since $\gamma_j(f(z)) = f_{ji}(z_{\gamma_i})$, we must show that $f_{ji}(q) \in {\cal C}_{\gamma_j}(\gamma_j(f(z));\, \kappa _u^j, \kappa _s^j)$. From the Mean Value Theorem, we have
\begin{displaymath}
  f_{ji}(q) - f_{ji}(z_{\gamma_i}) \in \left[ Df_{ji}(N_{\gamma_i}) \right]\, (q - z_{\gamma_i}),
\end{displaymath}
which implies that
\begin{eqnarray*}
  f_{ji}(q) - f_{ji}(z_{\gamma_i}) &=& \left\vert \pi_x(q - z_{\gamma_i}) \right\vert \, \left[ Df_{ji}(N_{\gamma_i}) \right]\, \frac{q - z_{\gamma_i}}{\left\vert \pi_x(q - z_{\gamma_i}) \right\vert} \\
  &=& \left\vert \pi_x(q - z_{\gamma_i}) \right\vert \, w,
\end{eqnarray*}
for some $w = (w_x, w_{y_u}, w_{y_s}) \in \left[ Df_{ji}(N_{\gamma _i}) \right] \, v$, with $v = (q - z_{\gamma_i}) / \left\vert \pi_x(q - z_{\gamma_i}) \right\vert$. Our choice of norms combined with assumption~\eqref{eq:cone-prop} guarantees that $w \in {\cal C}_{\gamma_j}(0;\, \kappa _u^j, \kappa _s^j)$. Hence, $f_{ji}(q) - f_{ji}(z_{\gamma_i}) \in {\cal C}_{\gamma_j}(0;\, \kappa _u^i, \kappa _s^i)$ and, thus, $f_{ji}(q) \in {\cal C}_{\gamma_j}(f_{ji}(z);\, \kappa_u^j, \kappa _s^j)$, as required. \newline
\end{proof}

We apply Lemma~\ref{lem:cone-propagation} as follows. Recall that the h-sets in our family are defined in local coordinates as
\begin{displaymath}
  \gamma_{ab}^{-1}(N_{abc}) := [-0.1, 0.1] \times \Big( \mathcal{I}_{abc} \times [-0.4, 0.4] \Big),
\end{displaymath}
and the intervals $\mathcal{I}_{abc}$ are defined recursively based on iterates of the intervals
\begin{displaymath}
  \mathcal{I}_{10c} := c \, \Delta + [-\Delta,\, \Delta]
  \qquad \mbox{and} \qquad
  \mathcal{I}_{20c} := -c \, \Delta + [-\Delta ,\, \Delta],
\end{displaymath}
with $\Delta = \frac{2}{50} = 0.04$. We choose $\kappa_u = \kappa_s = \frac{1}{2} \Delta = 0.02$ as the positive constants associated with cones for the h-sets $N_{10c}$ and $N_{20c}$ for $c = 0, \ldots, 49$. Hence, for $i \in \{ 10c, 20c \}_{c \in \{0, \ldots, 49\}}$, the cones are defined in local coordinates as
\begin{displaymath}
  \begin{array}{rcl}
    {\cal C}_{\gamma_i}(z_{\gamma_i};\, \frac{1}{2} \Delta, \frac{1}{2} \Delta)
    = \big\{ q \in \mathbb{R}^n \bigm| \frac{1}{2}\Delta \, \left\vert \pi_x \left( q - \gamma_i(z) \right) \right\vert &>& \left\vert \pi_{y_u}\left( q - \gamma_i(z) \right) \right\vert \text{ and } \smallskip \\
   \frac{1}{2} \Delta \, \left\vert \pi_x \left( q - \gamma_i(z) \right) \right\vert
    &>& \left\vert \pi_{y_s}\left( q - \gamma_i(z) \right) \right\vert \big\} .
  \end{array}
\end{displaymath}
We note that, due to our choice $\kappa_u = \kappa_s = \frac{1}{2} \Delta$ and the fact that the sets $\{ N_{10c}, $ $N_{20c} \}_{c = 0, \ldots, 49}$ overlap and \eqref{eq:M-parts}, for every horizontal disk $[h]$ in $M$ we can choose some $N_i \in \{ N_{10c}, N_{20c} \}_{c = 0, \ldots, 49}$ so that $[h] \cap N_i = [h_i]$, for some horizontal disk $[h_i]$ in $N_i$. This ensures \emph{(B1)}.
We propagate the cones by using Lemma~\ref{lem:cone-propagation} with appropriate choices for the constants $\kappa_u$ and $\kappa_s$ to ensure that cone conditions are satisfied for each of the coverings. In this computer-assisted way, we show that condition~\emph{(B2)} is satisfied.

\subsection{Final steps in proving Theorem~\ref{th:henon}}
The validation of the conditions \emph{(B1)} and \emph{(B2)} described in Section~\ref{sec:validation} for the family of h-sets $\{ N_i \}_{i \in I}$ constructed in Section~\ref{sec:boxes} is performed for any choice $\xi \in [ 1.01,\, 1.125]$ for the H{\'e}non-like family~\eqref{eq:Henon-family}. The proof that there exists a set $U\subset \mathbb{R}^3$ such that $\bigcup N_i\subset U$ and the invariant set of $f$ in $U$ is hyperbolic and transitive uses standard, earlier established methods and can be found in the Appendix. Since the sets involved in our construction depend on the choice of the parameter $\xi$, we apply a form of interval enclosure on the parameter as well. More precisely, we subdivide the parameter interval $[1.01,1.125]$ into $115$ sub-intervals of width $10^{-3}$ and conduct our construction separately on each of these sub-intervals; the computer-assisted validation over the entire range of $\xi$-values takes under a second on a standard laptop. Taken together, we obtain a proof of the existence of a $2$-blender for the $\xi$-family of H{\'e}non-like maps with $\xi \in [1.01,\, 1.125]$.

We have used the CAPD library\footnote{Computer Assisted Proofs in Dynamics: http://capd.ii.uj.edu.pl.} \cite{CAPD2021} as the tool for our interval arithmetic computations.

\section{Discussion and conclusions}
\label{sec:conclusions}

We presented a computer-assisted method for proving the existence of a blender for a given family of diffeomorphisms. The algorithm requires construction of a finite number of covering sequences with h-sets that intersect a transitive hyperbolic set. We select the necessary covering sequences by following different homoclinic excursions, but this is only a convenient approach and the algorithm itself does not depend on knowledge of homoclinic or heteroclinic connecting orbits. The method is very flexible in that it can be applied to ranges of a specified parameter. We successfully applied the algorithm in Section~\ref{sec:CAP} to demonstrate existence of a $2$-blender for the H{\'e}non-like family~\eqref{eq:Henon-family} from~\cite{hkos_blenderDEA, hkos_blender} over the range $\xi \in [1.01,\, 1.125]$ of the weak expansion rate $\xi$. Our construction involves two different homoclinic orbits and, notably, we chose not to include the two fixed points of the H{\'e}non-like family in the family of h-sets. Hence, the blender we identified here is an invariant subset of the (maximal) blender that has been considered in previous work~\cite{hkos_blender, hkos_blenderDEA, hkos_boxedin}. Indeed, our construction and computer-assisted method of proof of existence of a blender do not require first to identify the underlying  \emph{maximal} transitive hyperbolic set.

Our objective here was to provide a proof of concept for an example previously considered in the literature, and we have been able to do so without difficulty and in a computationally efficient manner. Numerical evidence in \cite{hkos_blenderDEA} suggests that the blender exists well beyond the range proven with our algorithm (up to $\xi \approx 1.6$). We believe that the $\xi$-range of validity of our computer-assisted proof could be extended via a combination of the following three considerations.
\begin{itemize}
\item
When using the h-sets introduced above, the bottle neck for the proof with $\xi >1.125$ is that the final coverings $N_{14c} \cover{f} M$ for $c = 0,\ldots ,49$ fail, because the sets do not align in the local weakly expanding coordinate $y_u$. This is due to the large expansion by $\xi^5$ that results from taking five steps along a sequence of coverings. A remedy could be to consider larger h-sets, which require fewer steps in the sequences of coverings. However, this would complicate validation of the required conditions. For example, in our current setting, to validate covering and cone conditions we compute interval enclosures of $Df_{ji}(N_{\gamma_i})$, $f_{ji}(N_{\gamma_i})$, $f_{ji}(N^-_{\gamma_i,l})$ and $f_{ji}(N^-_{\gamma_i,r})$ without any subdivisions of the sets $N_{\gamma_i}$, $N^-_{\gamma_i,l}$ and $N^-_{\gamma_i,r}$, and our bounds turn out to be sharp enough. Enlarging the h-sets would likely require subdivisions, which would slow down the computations.
\item
We are only using the homoclinic connections associated with the hyperbolic fixed point $p^+$. One could perform a similar construction by using $p^-$, which could lead to a different, extended parameter range for $\xi$.
\item
We could place one mother h-set $M^+$ close to $p^+$ and another mother h-set $M^-$ close to $p^-$ and, instead of using homoclinic orbits, position the h-sets along heteroclinic orbits between $p^+$ and $p^-$. Such heteroclinic connections will require fewer iterates.
\end{itemize}
It is beyond the scope of this paper to consider further refinements with such more complicated constructions of covering relations and to obtain a more comprehensive coverage of the known $\xi$-range with a blender of the H{\'e}non-like family. Again, our goal was to introduce and demonstrate a general method. Indeed, proving the existence of blenders, and also of heterodimensional cycles, in other dynamical systems remains an interesting and challenging task for future research.

\section*{Acknowledgements}
Authors would like to express sincere thanks to the anonymous reviewer for his corrections and suggestions, which allowed us to substantially improve the earlier version of this paper.

B.K.\ and H.M.O.\ are grateful for the hospitality and financial support during their one-week visit in Krak{\'o}w. M.C.\ was partially supported by NCN grants 2019/35/B/ST1 /00655 and 2021/41/B/ST1/00407. The research of B.K.\ and H.M.O.\ was partially supported by Royal Society Te Ap\={a}rangi Marsden Fund grant \#22-UOA-204. P.Z.\ was partially supported by NCN grant 2019/35/B/ST1/00655.

\appendix
\section{Establishing hyperbolicity and transitivity}
\label{sec:hyp-trans}
In Section~\ref{sec:CAP}, we applied Theorem~\ref{th:main} to prove the existence of a $2$-blender for the H\'enon-like family~\eqref{eq:Henon-family}. For completeness, we explain how we validate with computer-assisted tools that there exists a set $U$ whose invariant set is hyperbolic and transitive, and which contains $\bigcup_{i \in I} N_i$, as required for Theorem~\ref{th:main}.

Consider a set of indexes $J:=\{0,11,12,13,14,21,22,23\}$ and assume that we have a sequence of h-sets $L_j$ for $j\in J$. These h-sets will have $d_x=2$ and $d_y=1$. This means that the dimensions of the exit set matches the dimension of the unstable bundle, and the dimension of the entry set matches the dimension of the stable bundle. We will choose these sets to ensure that
\begin{align}
  	&L_{0} \cover{f} L_{11} \cover{f} L_{12} \cover{f} L_{13}\cover{f} L_{14} \cover{f} L_{0} , \label{eq:L-seq-1}\\
	&L_{0} \cover{f} L_{21} \cover{f} L_{22} \cover{f} L_{23} \cover{f} L_{0}.\label{eq:L-seq-2}
\end{align}
We emphasise that in \eqref{eq:L-seq-1}--\eqref{eq:L-seq-2} the topological alignment for the coverings agrees with the contraction and expansion of the system. The topological alignment involved in the coverings from the sequences \eqref{eq:L-seq-1}--\eqref{eq:L-seq-2} is depicted in Figure \ref{fig:covering-2}. Due to the good topological alignment of the exit and entry coordinates with the coordinates of hyperbolic contraction and expansion we can ensure that the sequences of coverings  \eqref{eq:L-seq-1}--\eqref{eq:L-seq-2} start and finish with the same set $L_{0}$.

For our h-sets $L_j$ we use the same local coordinates as for the h-sets $N_j$. Namely, we use $\gamma_M$ defined in \eqref{eq:coordinate-change-M} for the local coordinates of $L_{0}$ and $\gamma_{1b},\gamma_{2b}$ defined in \eqref{eq:coordinate-changes-i} for the local coordinates of $L_{j}$ for $j\in J\setminus \{0\}$. (The local coordinates $(x_1,x_2,y)$ of the sets $L_{0}$, $L_{1b}$ and $L_{2b}$ correspond to the local coordinates $(x,y_u,y_s)$ of the sets $M$, $N_{1b}$ and $N_{2b}$, respectively; namely, $x_1=x$, $x_2=y_u$ and $y=y_s$.)

\begin{figure}[t!]
  \centering
  \includegraphics{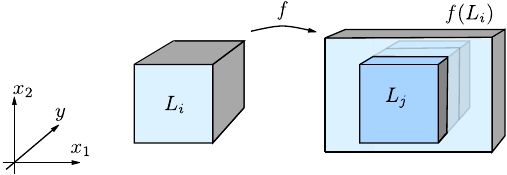}
  \caption{\label{fig:covering-2}
    Illustration of a covering relation $L_{i} \cover{f} L_{j}$ with $d_u=2$ and $d_s=1$.}
\end{figure}

We have chosen our sets so that
\begin{align}
	M \subset & \, L_{0} \notag \\
	N_{1bc}\subset & \, L_{1b} \qquad \mbox{for } c \in \{0,...,49\}\mbox{ and } b=1,2,3,4, \label{eq:N-sub-L} \\
	N_{2bc}\subset & \, L_{2b} \qquad \mbox{for } c \in \{0,...,49\}\mbox{ and } b=1,2,3. \notag
\end{align}
Because of \eqref{eq:N-sub-L} we have
\[
	\bigcup_{i\in I} N_i \subset U:= \bigcup_{j\in J} L_{j}.
\]
In more detail, we have chosen the sets $L_{0},L_{1b}$ and $L_{2b}$ so that their projections onto the local coordinates $x_1,y$ are the same as for $M$, $N_{1b}$ and $N_{2b}$, respectively. (This also means that the projections of $L_{0},L_{1b}$ and $L_{2b}$ onto the coordinates $X,Y$ are the same as the projections of $M$, $N_{1b}$ and $N_{2b}$, respectively.) Moreover, we have chosen the sets $L_{0}$, $L_{1b}$ and $L_{2b}$ to be larger along the local coordinate $x_2$ than the sets $M$, $N_{1b}$ and $N_{2b}$, respectively.
In our computer program we have chosen the set $L_{0}$ in the local coordinates given by $\gamma_M$ to be of size $\pi_{x_2}L_{\gamma_M}=[-100,100]$. This means that $M\subset L_{0}$ since $\pi_{x_2}L_{\gamma_M}$ is much larger than $\pi_{x_2}M=[-2,2]$; see \eqref{eq:M-set}. We have chosen such a large interval since it works well for all parameters $\xi \in [1.01,\, 1.125]$. We choose the sizes of the intermediate sets in the covering sequences \eqref{eq:N-sub-L} so that for a covering $L_{i} \cover{f} L_{j}$ we have $\pi_{x_2}f_{ji}(L_{\gamma_i}) \subset \pi_{x_2}L_{\gamma_j}$ and so that $N_{jc}\subset L_j$ for $c=0,\ldots,49$ and $j\in J\setminus \{0\}$; this is illustrated in Figure \ref{fig:covering-2}. (This choice is performed automatically by our computer program.) We then
use an analogue of Lemma \ref{lem:cov-1d-unstble} to validate the coverings \eqref{eq:L-seq-1}--\eqref{eq:L-seq-2}.

\subsection{Establishing hyperbolicity}
\label{sec:validation-hyper}
To validate hyperbolicity we can use the notion from~\cite{MR2736320} of a strongly hyperbolic covering relation. Recall that our h-sets $\{L_j\}_{j\in J}$ are equipped with the local coordinates $x=(x_1,x_2) \in \mathbb{R}^{d_x}=\mathbb{R}^2$ and $y  \in \mathbb{R}^{d_y}=\mathbb{R}$. The coordinates $(x_1, x_2)$ are $x_x=d_u=2$ dimensional and expanding, and $y$ is $d_y=d_s=1$ dimensional and contracting. However, even though $d_u + d_s = n$, we do not assume or require that the coordinates $(x_1,x_2)$ and $y$ are perfectly aligned with the unstable and stable fibre bundles $E^u$ and $E^s$, respectively.

\begin{definition}[Strong hyperbolicity of h-sets~\cite{MR2736320}]
\label{def:strong-hyperbolicity}
Let $\{ L_j \}_{j \in J}$ be a family of h-sets associated with diffeomorphisms $f_{ji} : \mathbb{R}^{d_x} \times \mathbb{R}^{d_y} \to \mathbb{R}^n$. Define the $n \times n$ diagonal matrix
\begin{equation}
  Q = \mathrm{diag}\left( \mathrm{Id}_{d_u},\, -\mathrm{Id}_{d_s} \right), \label{eq:Q-def}
\end{equation}
where $\mathrm{Id}_{d_u}$ and $\mathrm{Id}_{d_s}$ are the identity matrices of dimensions $d_u$ and $d_s$, respectively.  We say that $\bigcup_{j \in J} L_j$ is \emph{strongly hyperbolic} if for every $i, j \in J$ such that $f(L_i) \cap L_j \neq \emptyset$, the matrix
\begin{equation}
\label{eq:strong-hyperbolicity}
  \left[ Df_{ji}(\gamma_i(z)) \right]^\top  Q \, \left[ Df_{ji}(\gamma_i(z)) \right] - Q,
\end{equation}
is positive definite for every $z \in L_i \cap f^{-1}(L_j)$.
\end{definition}

To establish that an invariant set in $U=\bigcup_{j \in J}L_j$ is hyperbolic we can now invoke a known result from~\cite{MR2736320}, namely,

\begin{theorem}\cite{MR2736320}
\label{th:hyperbolic-set}
If $U=\bigcup L_j$ is strongly hyperbolic then the (forward and backward) invariant set
\begin{displaymath}
  \Lambda = \mathrm{Inv}\left( f,\, U \right)
\end{displaymath}
is a hyperbolic set.
\end{theorem}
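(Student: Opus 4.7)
The plan is to use the quadratic form defined by $Q$ as a Lyapunov-type cone form, following the classical approach via cone fields due to Lewowicz, Wojtkowski and others. In local coordinates of each $L_j$, define the indefinite quadratic form $[v]_Q := v^\top Q v$ on $\mathbb{R}^n$, together with the $Q$-positive and $Q$-negative open cones
\begin{displaymath}
  \mathcal{C}^u_{\gamma_j}(z) := \{ v \in \mathbb{R}^n \mid v^\top Q v > 0 \},
  \qquad
  \mathcal{C}^s_{\gamma_j}(z) := \{ v \in \mathbb{R}^n \mid v^\top Q v < 0 \}.
\end{displaymath}
The positive-definiteness assumption \eqref{eq:strong-hyperbolicity} is exactly the statement that $[Df_{ji}(v)]_Q > [v]_Q$ for every nonzero $v$ (at every $z \in L_i \cap f^{-1}(L_j)$), so $Df_{ji}$ maps the closed $Q$-positive cone strictly into $\mathcal{C}^u_{\gamma_j}$; by inverting, $Df_{ji}^{-1}$ maps the closed $Q$-negative cone strictly into $\mathcal{C}^s_{\gamma_i}$.

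Next I would upgrade this strict invariance to uniform expansion/contraction of the $Q$-form. Since $J$ is finite and each $L_j$ is compact, the continuous function $v \mapsto v^\top ([Df_{ji}]^\top Q [Df_{ji}] - Q) v$ attains a positive minimum on the unit sphere, uniformly in $j,i$ and $z \in L_i \cap f^{-1}(L_j)$. Together with an analogous bound on the operator norm of $Df_{ji}$, this yields constants $\mu > 1$ and $C > 0$ such that for any orbit $\{f^k(p)\}_{k \in \mathbb{Z}} \subset U$ and any $v$ in the closed $Q$-positive cone at $p$, one has $[Df^n(p) v]_Q \geq C\mu^n [v]_Q$, and dually for $Df^{-n}$ on $Q$-negative vectors.

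Then, for each $p \in \Lambda$, I would define the candidate unstable subspace by the nested cone intersection along the backward orbit,
\begin{displaymath}
  E^u_p := \bigcap_{n \geq 0} Df^n\bigl(f^{-n}(p)\bigr)\!\left(\overline{\mathcal{C}^u_{\gamma_{j_{-n}}}(f^{-n}(p))}\right),
\end{displaymath}
where $j_{-n}$ is any index with $f^{-n}(p) \in L_{j_{-n}}$, and symmetrically
\begin{displaymath}
  E^s_p := \bigcap_{n \geq 0} Df^{-n}\bigl(f^{n}(p)\bigr)\!\left(\overline{\mathcal{C}^s_{\gamma_{j_n}}(f^{n}(p))}\right).
\end{displaymath}
The strict cone invariance makes these decreasing nested intersections of closed cones, hence nonempty closed cones. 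The key step — and I expect this to be the main obstacle — is to show that $E^u_p$ is in fact a linear subspace of dimension exactly $d_u$ (and similarly $\dim E^s_p = d_s$). The signature $(d_u, d_s)$ of $Q$ provides the upper bound on $\dim E^u_p$: any linear subspace on which $v^\top Q v \geq 0$ has dimension at most $d_u$. For the lower bound one uses that $Df^n$ has rank $n$ and maps the full $d_u$-dimensional $Q$-positive subspace of the original tangent space into the cone; a standard Grassmannian compactness argument (passing to limits of $d_u$-planes contained in successive cones) produces a $d_u$-dimensional subspace in the intersection. Linearity of $E^u_p$ then follows because two vectors with $[v]_Q, [w]_Q \geq 0$ that both lie in $E^u_p$ must satisfy $[v+w]_Q \geq 0$ with equality on the whole line if strict inequality ever held, forcing agreement of the limiting planes.

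Finally, I would verify the hyperbolic splitting and exponential rates. The decomposition $T_p\mathbb{R}^n = E^u_p \oplus E^s_p$ follows from $\dim E^u_p + \dim E^s_p = n$ plus $E^u_p \cap E^s_p = \{0\}$ (since nonzero intersection would lie in both the $Q$-nonnegative and $Q$-nonpositive cones, forcing $[v]_Q = 0$, which is excluded by the strict uniform expansion). Invariance $Df(E^u_p) = E^u_{f(p)}$ is immediate from the definition. The contraction/expansion estimates of Definition~\ref{def:hyp} then follow from the uniform $\mu^n$-growth of $[\cdot]_Q$ on unstable vectors (and decay on stable vectors), combined with uniform equivalence between $[\cdot]_Q$ restricted to $E^u_p$ (where it is positive definite, by the signature bound) and the Euclidean norm, yielding the required constants $c > 0$ and $\lambda = \mu^{-1} \in (0,1)$.
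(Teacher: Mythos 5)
The paper does not prove this statement at all: it is imported verbatim from the cited reference \cite{MR2736320}, so there is no in-paper argument to compare against. Your cone-field/quadratic-form strategy is precisely the standard route taken in that reference (Lewowicz--Wojtkowski style), and most of your steps are sound: the positive definiteness of $[Df_{ji}]^\top Q\,[Df_{ji}]-Q$ does give strict invariance of the closed $Q$-positive cone under $Df$ and of the closed $Q$-negative cone under $Df^{-1}$, and compactness of the finitely many $L_j$ upgrades this to a uniform gain $[Df\,v]_Q\geq [v]_Q+\delta\|v\|^2$, hence $[Df\,v]_Q\geq(1+\delta)[v]_Q$ on the positive cone since $[v]_Q\leq\|v\|^2$.

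Two of your justifications, however, do not work as stated. First, positive definiteness of $Q$ restricted to $E^u_p$ does \emph{not} follow from the signature bound: a maximal $Q$-nonnegative subspace can contain isotropic vectors (e.g.\ the span of $(1,1)$ for $Q=\mathrm{diag}(1,-1)$). What saves you is strict cone invariance: every nonzero $v\in E^u_p$ equals $Df(w)$ for some nonzero $w$ in the closed positive cone, so $[v]_Q\geq\delta\|w\|^2\geq(\delta/K^2)\|v\|^2>0$ with $K$ a uniform bound on $\|Df_{ji}\|$. Second, and more seriously, your linearity argument for $E^u_p$ relies implicitly on convexity of the closed cone $\{v^\top Qv\geq 0\}$, which is false (for $Q=\mathrm{diag}(1,-1)$ the vectors $(1,0.9)$ and $(-1,0.9)$ lie in the cone but their sum does not), so "$[v]_Q,[w]_Q\geq 0$ both in $E^u_p$ implies $[v+w]_Q\geq 0$" is not a valid inference. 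The standard repair uses estimates you already have: telescoping the uniform gain along a backward orbit gives $[v]_Q\geq\delta\sum_{k\geq 1}\|Df^{-k}(p)v\|^2$ for every $v\in E^u_p$, so backward iterates of vectors in $E^u_p$ decay exponentially; the set $F_p$ of such vectors is linear by definition, has dimension at most $d_u$ (any vector of $F_p$ in the closed $Q$-negative cone has $\|Df^{-n}v\|^2\geq -[Df^{-n}v]_Q\geq -[v]_Q$ bounded away from zero, so a dimension count rules out $\dim F_p>d_u$), and contains the $d_u$-plane produced by your Grassmannian compactness argument; hence $E^u_p=F_p$ is a $d_u$-dimensional subspace. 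With these two repairs the rest of your outline (transversality of $E^u_p$ and $E^s_p$, invariance, and the exponential rates via uniform equivalence of $[\cdot]_Q^{1/2}$ and the norm on each subbundle, transported through the finitely many affine charts $\gamma_j$) goes through.
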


For every local map $f_{ji}$ involved in the coverings \eqref{eq:L-seq-1}--\eqref{eq:L-seq-2}, we follow Definition \ref{def:strong-hyperbolicity} and validate that every matrix from the interval enclosure
\begin{equation}
\label{eq:strong-hyp-cond-cap}
  \left[ Df_{ji}(L_{\gamma_i}) \right]^\top Q \, \left[ Df_{ji}(L_{\gamma_i}) \right] - Q
\end{equation}
is positive definite; this proves hyperbolicity of the invariant set that is contained in $U$ for the H{\'e}non-like family~\eqref{eq:Henon-family}. Theorem~\ref{th:hyperbolic-set} does not guarantee that $\Lambda $ is non empty. However, this is ensured by conditions~\emph{(B1)} and~\emph{(B2)}, as pointed out in Remark \ref{rem:inv-non-empty}.

\subsection{Establishing transitivity}
To establish transitivity we show that the invariant set $\Lambda$ in $U$ for $f$ is conjugate to a transitive set $\Lambda_\Sigma \subset \Sigma = \{0,1\}^{\mathbb{Z}}$ for the shift map.
For this, we use the following proposition. \newline

\begin{proposition}
\label{prop:conjugacy}
Consider an infinite sequence of covering relations
\begin{displaymath}
  \cdots \cover{f} L_{j_{-1}} \cover{f} L_{j_0} \cover{f} L_{j_1} \cover{f} L_{j_2} \cover{f} \cdots,
\end{displaymath}
where $L_{j_k}$ is selected from the finite family of h-sets $\{ L_j \}_{j \in J}$ for all $k \in \mathbb{Z}$. For each covering $L_{j_k} \cover{f} L_{j_{k+1}}$, define the interval enclosure $[ Df_k ( L_{\gamma_{j_k}} )]$, where $f_k := \gamma_{j_{k+1}} \circ f \circ \gamma_{j_k}$ represents $f$ in local coordinates. Suppose for every $f_k$, we have that every $n \times n$ matrix in the interval enclosure
\begin{equation}
\label{eq:conjugacy-condition}
  [ Df_k( L_{\gamma_{j_k}} ) ]^\top Q \, [ Df_k (L_{ \gamma_{j_k}} )]- Q
\end{equation}
is strictly positive definite; here, the matrix $Q$ is the same matrix as defined in~\eqref{eq:Q-def}. Then, for every $\varepsilon > 0$ there exists $m = m(\varepsilon) > 0$, such that for $z_1, z_2 \in L_{j_0}$ with
\begin{displaymath}
  f^k(z_1), f^k(z_2) \in L_{j_k} \qquad\text{for all } k \in \{ -m, \ldots, m \},
\end{displaymath}
we have $\| z - z^\prime \| < \varepsilon$. (Here we use the standard Euclidean norm $\|\cdot \|$ in $\mathbb{R}^n$.)
\end{proposition}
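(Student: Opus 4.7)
\emph{The plan} is to linearize the dynamics along the pair of orbits and then apply the standard quadratic-form hyperbolicity argument. By compactness of each $L_j$ and finiteness of the family $\{L_j\}_{j\in J}$, the interval enclosures $[Df_k(L_{\gamma_{j_k}})]$ range over a compact family of invertible matrices on which the strict positive definiteness of~\eqref{eq:conjugacy-condition} upgrades to a uniform bound: there exist constants $c>0$ and $M\ge 1$ such that $A^\top Q A - Q \succeq c\,I_n$ and $\|A\|,\|A^{-1}\|\le M$ for every $A$ appearing in any of these enclosures.

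Next, I would set $v_k := \gamma_{j_k}(f^k(z_1)) - \gamma_{j_k}(f^k(z_2))$ and, via the integral mean value theorem applied to $f_k$ in local coordinates, write $v_{k+1} = A_k v_k$ with
\[
  A_k \;=\; \int_0^1 Df_k\bigl(\gamma_{j_k}(f^k(z_2)) + t\,v_k\bigr)\,dt \;\in\; [Df_k(L_{\gamma_{j_k}})].
\]
The Lyapunov function $q_k := v_k^\top Q v_k$ then satisfies $q_{k+1}-q_k = v_k^\top(A_k^\top Q A_k - Q) v_k \ge c\|v_k\|^2$, so $q_k$ is strictly increasing whenever $v_k\neq 0$. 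In particular, the closed $Q$-positive cone $C^+=\{v:q\ge 0\}$ is forward-invariant under the cocycle $\{A_k\}$, while the closed $Q$-negative cone $C^-=\{v:q\le 0\}$ is backward-invariant under $\{A_k^{-1}\}$, each with uniform $Q$-expansion.

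Propagated along the bi-infinite pseudo-orbit, these cone inequalities produce an invariant splitting $\mathbb{R}^n = E^u_k \oplus E^s_k$ with $\dim E^u_k = d_u$, $\dim E^s_k = d_s$, together with a uniform constant $\lambda > 1$ such that $\|A_k v\|\ge \lambda\|v\|$ for $v\in E^u_k$ and $\|A_k^{-1} v\|\ge \lambda\|v\|$ for $v\in E^s_{k+1}$. This is the cocycle analogue of the standard hyperbolicity theory that already underlies Theorem~\ref{th:hyperbolic-set}, and it can be quoted from the framework of~\cite{MR2736320}. I expect this to be the main technical obstacle: while the individual ingredients (cone invariance, $Q$-monotonicity, uniform $Q$-expansion) are straightforward, assembling them into a splitting with a quantitative, uniform expansion rate $\lambda$ requires a graph-transform or Perron-type fixed-point argument.

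To conclude, decompose $v_0 = v_0^u + v_0^s$ along $E^u_0 \oplus E^s_0$. Using a uniform lower bound on the angle between $E^u_0$ and $E^s_0$ (again from compactness), together with the bound $\|v_k\|\le R$ valid for $|k|\le m$, where $R$ is the common diameter of the local images $\gamma_{j_k}(L_{j_k})$, one obtains $\|v_0^u\|\le C\lambda^{-m}\|v_m^u\|\le CR\lambda^{-m}$ and analogously $\|v_0^s\|\le CR\lambda^{-m}$, hence $\|v_0\|\le 2CR\lambda^{-m}$ for some uniform $C>0$. Passing back to the ambient Euclidean norm through the uniform Lipschitz constant of $\gamma_{j_0}^{-1}$ and choosing $m=m(\varepsilon)$ large enough so that the resulting bound is below $\varepsilon$ completes the proof.
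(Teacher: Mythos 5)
Your setup through the Lyapunov inequality is exactly right and matches the paper's: compactness of the finitely many $L_j$ upgrades strict positive definiteness of~\eqref{eq:conjugacy-condition} to a uniform margin, the integral mean value theorem gives $v_{k+1}=A_k v_k$ with $A_k$ in the enclosure, and $q_{k+1}-q_k\ge c\,\|v_k\|^2$ follows. The gap is in what you do next. The cocycle $\{A_k\}$ is defined only for $|k|\le m$ and depends on the particular pair $(z_1,z_2)$, whose orbits are merely required to shadow the covering sequence for finitely many steps --- they need not lie in the invariant set $\Lambda$. So there is no bi-infinite cocycle on which to run a graph-transform or Perron construction, and Theorem~\ref{th:hyperbolic-set} (a statement about the splitting over $\Lambda$) cannot simply be quoted to produce the invariant splitting $E^u_k\oplus E^s_k$ with a uniform rate $\lambda$ along this finite, pair-dependent orbit segment. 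You flag this step as the main technical obstacle and leave it unproven; as written it carries the entire weight of the argument, and patching it (extending the finite cocycle, controlling how approximate splittings and the angle bound depend on $m$) is genuinely delicate.

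The irony is that no splitting is needed: your second paragraph already contains the proof. Instead of decomposing the vector $v_0$, decompose into cases on the sign of $q(v_0)=v_0^\top Q\,v_0$. If $q(v_0)\ge 0$, then since $q(x,y)=\|x\|^2-\|y\|^2\le\|(x,y)\|^2$, the inequality $q_{k+1}\ge q_k+c\,\|v_k\|^2\ge(1+c)\,q_k$ propagates forward once $q_1\ge c\,\|v_0\|^2$, giving $q_m\ge(1+c)^{m-1}c\,\|v_0\|^2$; as $q_m\le\|v_m\|^2$ is bounded by the common squared diameter $R^2$ of the finitely many sets $\gamma_j(L_j)$, this forces $\|v_0\|^2\le R^2c^{-1}(1+c)^{-(m-1)}$, which (after the uniform Lipschitz constants of the $\gamma_{j_0}^{-1}$) is below $\varepsilon^2$ for $m$ large. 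If $q(v_0)<0$, run the same argument backward in time with $f_k^{-1}$ and $-Q$. This two-case argument is precisely the paper's proof; it uses the marginally stronger normalisation that every $D$ in the enclosure satisfies $D^\top Q\,D-\lambda Q-\delta\,\mathrm{Id}_n\succ 0$ for some $\lambda>1$ and $\delta>0$, but the effect is the same.
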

\begin{proof}
Let $z_1, z_2 \in L_{j_0}$ and define $z_{\ell}^\gamma := \gamma_{j_0}(z_{\ell})$ for ${\ell} = 1, 2$. We will consider two cases, namely, $\left( z_1^\gamma - z_2^\gamma \right)^\top Q \, \left( z_1^\gamma - z_2^\gamma \right) \geq 0$ and $\left( z_1^\gamma - z_2^\gamma \right)^\top Q \, \left( z_1^\gamma - z_2^\gamma \right) < 0$.

Assume first that $\left( z_1^\gamma - z_2^\gamma \right)^\top Q \, \left( z_1^\gamma - z_2^\gamma \right) \geq 0$. Since the index set $J$ is finite and the h-sets $L_j$, for $j \in J$, are compact, the family of interval enclosures~\eqref{eq:conjugacy-condition} for all $k \in \mathbb{Z}$ is compact. This means that for suitably small $\delta > 0$ and for $\lambda > 1$ suitably close to $1$, every matrix in the interval enclosure
\begin{displaymath}
 [ Df_k( L_{\gamma_{j_k}} ) ]^\top Q \, [ Df_k (L_{ \gamma_{j_k}} )]  - \lambda \,  Q - \delta \, \mathrm{Id}_n
\end{displaymath}
is strictly positive definite. Let us introduce the auxiliary notation $q(z) := z^\top Q z$. Taking $z = (x, y)=(x_1,x_2, y)$ from~\eqref{eq:Q-def}, we observe that
\begin{displaymath}
  q(z) = q(x, y) = (x, y)^\top Q \, (x, y) = \| x \|^2 - \| y \|^2 \leq \| x \|^2 + \| y \|^2 = \| z \|^2.
\end{displaymath}
By the Mean Value Theorem, for any $p_1, p_2 \in L_{\gamma_{j_k}}$, we have
\begin{align*}
  f_k(p_1) - f_k(p_2) & = \int_0^1 \frac{d}{ds} f_k\left( p_2 + s \, (p_1 - p_2)  \right) \; d s \\
                      & = \left( \int_0^1 Df_k\left( p_2 + s \, (p_1 - p_2) \right) \; d s \right) \, (p_1 - p_2) \\
                      & =: D \, (p_1 - p_2),
\end{align*}
where $D \in [ Df_k ( L_{\gamma_{j_k}} ) ]$ depends on $p_1$ and $p_2$. Consequently,
\begin{align*}
  & q \left( f_k(p_1) - f_k(p_1) \right) - \lambda \, q(p_1 - p_2) - \delta \, \| p_1 - p_2 \|^2 \\
  & = (p_1 - p_2)^\top \left( D^\top Q \, D - \lambda \, Q - \delta \, \mathrm{Id}_n \right) \, (p_1 - p_2) > 0.
\end{align*}
This means that, since $q(z_1^\gamma - z_2^\gamma) \geq 0$,
\begin{align*}
  & \| f_m \circ \cdots \circ f_0(z_1^\gamma) - f_m \circ \cdots \circ f_0(z_2^\gamma) \|^2 \\
  & \geq q \left( f_m \circ \cdots \circ f_0(z_1^\gamma) - f_m \circ \cdots \circ f_0(z_2^\gamma) \right) \\
  & \geq \lambda \, q \left(  f_{m-1} \circ \cdots \circ f_0(z_1^\gamma) - f_{m-1} \circ \cdots \circ f_0(z_2^\gamma) \right) \\
  & \vdots \\
  & \geq \lambda^m \, q \left( f_0(z_1^\gamma) - f_0(z_2^\gamma) \right) \\
  & \geq \lambda^m \left( \lambda \, q(z_1^\gamma - z_2^\gamma) + \delta \, \| z_1^\gamma - z_2^\gamma \|^2 \right)  \\
  & \geq \lambda^m \, \delta \, \| z_1^\gamma - z_2^\gamma \|^2.
\end{align*}
Since $\lambda > 1$ and $\delta > 0$, we will have $\| z_1^\gamma - z_2^\gamma \| < \varepsilon$ as required, provided we choose $m$ sufficiently large.

The proof for the case $\left( z_1^\gamma - z_2^\gamma \right)^\top Q \, \left( z_1^\gamma - z_2^\gamma \right)  < 0$ follows from mirror arguments, using the inverse map. \newline
\end{proof}

We are now ready to prove that the invariant set $\Lambda$ in $U=\bigcup L_j$ for the $\xi$-dependent H\'{e}non-like family~\eqref{eq:Henon-family} with $\mu = -9.5$ and $\beta = 0.3$ is transitive. Let us equip $\Sigma = \{  0, 1 \}^{\mathbb{Z}}$ with the usual metric $d : \Sigma \times \Sigma \to \mathbb{R}_+$, where
\begin{displaymath}
  d(a, b) = \sum_{k \in \mathbb{Z}} \frac{1}{2^{|k|}} \left\vert a_k - b_k \right\vert,
\end{displaymath}
and consider the usual shift map $\sigma : \Sigma \rightarrow \Sigma$, with $\sigma\left( (a_k)_{k \in \mathbb{Z}} \right) = (a_{k-1})_{k \in \mathbb{Z}}$. Let us consider $\alpha = 1\, 0\, 0\, 0\, 0$ and $\beta = 1\, 0\, 0\, 0$, and the set $\Lambda_\Sigma$ that consists of (infinite) sequences in $\Sigma$ constructed from the finite sequences $\alpha$ and $\beta$, and from shifts of such sequences. Note that $\sigma(\Lambda_\Sigma) = \Lambda_\Sigma$ and $\Lambda_\Sigma$ is transitive for the map $\sigma$. We show that $\Lambda$ is transitive by constructing a conjugacy between the dynamics on $\Lambda$ and on $\Lambda_\Sigma$.

Consider the transformation $\phi : \Lambda \to \Lambda_\Sigma$ that assigns to a point $z \in \Lambda$ a sequence $a = \phi(z) \in \Lambda_\Sigma$ according to the following rule
\begin{displaymath}
  a_k = 1 \quad \Longleftrightarrow \quad f^k(z) \in L_{0}.
\end{displaymath}

The intuition behind this definition of $\phi$ is based on the two covering sequences \eqref{eq:L-seq-1} and \eqref{eq:L-seq-2}. If $z \in L_{0} \cap \Lambda$ then the trajectory starting from $z$ will pass through either the first or the second sequence of coverings \eqref{eq:L-seq-1} or \eqref{eq:L-seq-2}, respectively. Hence, we either have $\alpha$ or $\beta$ as the first entries in $\phi(z)$. The remaining entries are also determined by the order in which the trajectory follows these two different sequences of coverings.

The two sequences of coverings \eqref{eq:L-seq-1} and \eqref{eq:L-seq-2}  can be glued in arbitrary order to produce infinite sequences, and through any such infinite sequence there exists an orbit of $f$ passing through it (see \cite{MR2060531}).
Hence, it follows that $\phi$ is surjective, and we have the conjugacy
\begin{displaymath}
  \phi \circ f = \sigma \circ \phi.
\end{displaymath}
Moreover, continuity of $f$ implies continuity of $\phi$ (if we choose two sufficiently close initial points in $\Lambda$, their trajectories pass through the same sequences of coverings and, hence, follow the same symbols, for a prescribed length). Continuity of $\phi^{-1}$ follows from Proposition~\ref{prop:conjugacy}. Therefore, there is a conjugacy between $\Lambda$ and $\Lambda_\Sigma$, and the fact that $\Lambda_\Sigma$ is transitive implies that $\Lambda$ is transitive.

We finish by noting that the interval enclosure defined in \eqref{eq:conjugacy-condition} is the same as the one from \eqref{eq:strong-hyp-cond-cap} that we used to validate  hyperbolicity.



\end{document}